\newcommand  {\DEES} {\text{\sc downs}}
\newcommand  {\AREA} {\text{\sc area}}
\newcommand  {\CORNERS} {\text{\sc corners}}
\newcommand  {\CINDEX} {\text{\sc c-index}}
\newcommand  {\qbinom} [2]{\genfrac[]{0pt}{}{#1}{#2}_q}
\newcommand  {\qtbinom} [2]{\genfrac[]{0pt}{}{#1}{#2}_{q,t}}
\renewcommand{\l} {q}
\newcommand  {\m} {t}    
\newcommand  {\D} {\text{\sf D}}
\renewcommand{\O} {\mathcal{O}}
\newcommand  {\G} [2]{G_{#1,#2}}
\renewcommand{\P} [2]{\mathcal{P}_{#1,#2}}
\renewcommand{\R} {\text{\sf R}}
\newcommand{\wa}{\pmb{a}}
\newcommand{\wb}{\pmb{b}}
\newcommand{\wu}{\pmb{u}}
\newcommand{\wv}{\pmb{v}}
\newcommand{\ww}{\pmb{w}}
\newcommand{\xmathpalette}[2]{\mathchoice
  {#1\displaystyle\textfont{#2}}%
  {#1\textstyle\textfont{#2}}
  {#1\scriptstyle\scriptfont{#2}}%
  {#1\scriptscriptstyle\scriptscriptfont{#2}}%
}
\newcommand{\Ro}{\mathsf{R}\mkern-3.7mu{\xmathpalette\R@o\relax}}
\newcommand{\oD}{\mathsf{\xmathpalette\o@D\relax\mkern-3.8mu D}}
\newcommand{\R@o}[3]{%
  \scalebox{0.6}{%
    \raisebox{\dimexpr\height-2\fontdimen22#22}{%
      $\m@th#1\bullet$%
    }%
  }%
}
\newcommand{\o@D}[3]{%
  \raisebox{\fontcharht#2\fam`D}{%
    \scalebox{0.65}{%
      \raisebox{-\height}{$\m@th#1\bullet$}%
    }%
  }%
}
\newcommand\vsp[1][.3em]{%
  {\ }\mbox{\kern.06em\vrule height.3ex}%
  \vbox{\hrule width#1}%
  \hbox{\vrule height.3ex}{\ }}
\theoremstyle{plain}
\newtheorem{Thm}{Theorem}
\newtheorem{Prop}[Thm]{Proposition}
\newtheorem{Cor}[Thm]{Corollary}
\newtheorem{fact}{Fact}
\providecommand{\customgenericname}{}
\newcommand{\newcustomtheorem}[2]{%
  \newenvironment{#1}[1]
  {%
   \renewcommand\customgenericname{#2}%
   \renewcommand\theinnercustomgeneric{##1}%
   \innercustomgeneric
  }
  {\endinnercustomgeneric}
}
\newcounter{tmp}
\theoremstyle{remark}
\newtheorem{Rem}[Thm]{Remark}
\begin{document}

\title{Scrambled Vandermonde Convolutions of Gaussian Polynomials}
\author{Magnus Aspenberg}
\address{Centre for Mathematical Sciences, Lund University}
\email{magnusa@maths.lth.se}
\author{Rodrigo A. P{\'e}rez}
\address{Department of Mathematical Sciences, IUPUI}
\email{rperez@math.iupui.edu}

\keywords{Gaussian polynomials, Integer partitions, Lattice paths, $q$-binomials, $q$-Vandermonde convolution}
\maketitle


\begin{abstract}
  It is well known that Gaussian polynomials (i.e., $q$-binomials) describe the distribution of the $\AREA$ statistic on monotone paths in a rectangular grid. We introduce two new statistics, $\CORNERS$ and $\CINDEX$; attach ``ornaments'' to the grid; and re-evaluate these statistics, in order to argue that all scrambled versions of the $\CINDEX$ statistic are equidistributed with $\AREA$. Our main result is a representation of the generating function for the bi-statistic $(\CINDEX,\CORNERS)$ as a two-variable Vandermonde convolution of the original Gaussian polynomial. The proof relies on explicit bijections between differently ornated paths.
\end{abstract}

Let $G_{m,n}$ be a rectangular grid of $m$ vertical by $n$ horizontal unit squares, and consider the collection $\P{m}{n}$ of paths of length $m+n$ joining the upper-left corner to the lower-right (thus, moving only by down/right steps).

The cardinality of $\P{m}{n}$ is $\binom{m+n}{m} = \binom{m+n}{n}$. A classic generalization of this elementary fact states that the $q$-binomial $\qbinom{m+n}{m}$ (aka Gaussian polynomial) describes a gradation of $\P{m}{n}$, so that the coefficient of $\l^a$ counts the number of paths that cover an area of $a$ squares (cf. Figure~\ref{fig:qt-2x4}). \\

Our aim here is to pursue further this combinatorial structure of $\P{m}{n}$. As groundwork, we describe in \S\ref{sect:Defns} a symmetric version of the Vandermonde formula; give basic facts about the auxiliary statistics $\DEES$ and $\AREA$; and begin to study the statistics $\CORNERS$ and $\CINDEX$ that constitute our main subject. In particular, we illustrate our later methods by sketching a proof that $\CINDEX$ is equidistributed with $\AREA$. In \S\ref{sect:Ornaments} we introduce certain ornaments on $G_{m,n}$ that alter the $\CORNERS$ and $\CINDEX$ values on any given path. It turns out that this ``scrambled'' version of $\CINDEX$ is {\em still equidistributed} with $\AREA$, up to a constant shift (i.e., is generated by $\qbinom{m+n}{m} \cdot \l^s$ for some concrete $s$).

The equidistribution claims of \S\ref{sect:Defns} and \S\ref{sect:Ornaments} are initially discussed only in outline, because they are straightforward consequences of our main result, Theorem~\ref{thm:Main}: For any given choice $\O$ of ornaments on $G_{m,n}$, the generating function of the scrambled bi-statistic $(\CINDEX,\CORNERS)$ is a two-variable polynomial $\qtbinom{m+n}{m}^{\O}$, which is described by Formula~\eqref{eqn:Main} as a shifted Vandermonde convolution that depends on $\O$. The proof in \S\ref{sect:Proof} uses {\em explicit bijective maps} between differently ornated copies of $\G{m}{n}$.

In \S~\ref{sect:Final} we infer some consequences of~\eqref{eqn:Main}, including proper proofs of the equidistribution claims, and discuss further results and open problems.

\section{Definitions}
\label{sect:Defns}

\subsection{$\boldsymbol{\l}$-Vandermonde convolutions}
\label{sect:Vandermonde}
In its standard presentation, the $\l$-Vandermonde formula reads
\begin{equation}
 \label{eqn:qVandermonde}
  \qbinom{x+y}{m} =
  \sum_j \qbinom{x}{m-j} \cdot \qbinom{y}{j} \cdot \l^{(x-m+j)j}.
\end{equation}
We make sense of this as follows. Fix $x,y$, and let $m$ vary; then,~\eqref{eqn:qVandermonde} decomposes {\em every} $\l$-binomial of level $x+y$ as a convolution of $x$-level and $y$-level binomials. In this work, however, we require a {\em different} interpretation.

\smallskip
Replace $\qbinom{x}{m-j}$ with the equivalent $\qbinom{x}{x-(m-j)}$, and consider the variable changes
\[x=m+(r-d) \quad,\quad y=n+(d-r) \quad,\quad j=c-r.\]
Then,~\eqref{eqn:qVandermonde} becomes
\begin{equation}
 \label{eqn:Our-qVandermonde}
  \qbinom{m+n}{m} =
  \sum_c \qbinom{m+r-d}{c-d} \cdot \qbinom{n+d-r}{c-r} \cdot \l^{(c-d)(c-r)}
\end{equation}
\[
\left( \begin{tabular}{c}
{\footnotesize Here, as elsewhere, bounds on the summation variable are left implicit,} \\
{\footnotesize since binomials out of range are always conveniently equal to zero}
\end{tabular}\right)
\]

\medskip
We read this, more symmetric version of the $\l$-Vandermonde formula, as providing a family of convolution representations for one and the same binomial. Whenever the difference $d-r$ equals a given value $Q$, equation~\eqref{eqn:Our-qVandermonde} decomposes $\qbinom{m+n}{m}$ as a convolution of $(m-Q)$-level and $(n+Q)$-level binomials. Notice that such representation depends only on the difference $Q$ and not on the specific choice of $d$ and $r$ (see comment below~\eqref{eqn:Our-qVandermonde}).

\subsection{Definitions}
\label{sect:Definitions}
The grid $G_{m,n}$ is delineated by $m+1$ horizontal lines, numbered 0 through $m$, and $n+1$ vertical lines, numbered 0 through $n$. The paths in $\P{m}{n}$ advance by down/right steps that Start at the upper-left corner $S$, and Finish at the lower-right corner $F$. The coordinates are set so that $S$ is at $(0,0)$, and $F$ at $(m,n)$, as in matrix notation.

To each path there is naturally associated a word in the alphabet $\{\D,\R\}$, with $m$ copies of $\D$ and $n$ copies of $\R$; whence $|\P{m}{n}| = \binom{m+n}{m} = \binom{m+n}{n}$. We obviate the distinction between path and word, as there is no risk of confusion; in this spirit, consider a word $\ww \in \P{m}{n}$. The action of turning a block $\R\D$ of $\ww$ into $\D\R$ or the other way around is a {\em swap}. Since the numbers of $\D$s and $\R$s are unchanged, swaps result in new elements of $\P{m}{n}$.

\subsection{Statistics}
\label{sect:Statistics}
Here we describe four statistics on $\P{m}{n}$, and state a few auxiliary facts that are well known, and/or easy to prove. At the end, we give a new characterization of $\l$-binomial coefficients in Fact~4. Although this will follow directly from our main result, we sketch an alternative proof that illustrates the connection to symmetric Vandermonde convolutions.

\renewcommand{\theenumi}{\alph{enumi})}
\renewcommand{\labelenumi}{\theenumi}.
\begin{enumerate}
  \item The statistic $\DEES$ is the sum of the positions within a word $\ww \in \P{m}{n}$,
    that hold a $\D$. Since the minimum of $\DEES$ is attained only by $m\D n\R$, and any other word allows a swap that decreases $\DEES$ by $1$, we find

    \begin{fact}
     \label{fact:Pconnected}
       $\P{m}{n}$ is connected under swaps.
    \end{fact}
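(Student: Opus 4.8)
The plan is to use $\DEES$ as a monovariant that drives every path down to a single canonical configuration, and then exploit the reversibility of swaps. Concretely, I would single out the \emph{sorted} word $\ww_0 = \D^m\R^n$ (all $\D$s first) as a hub, show that every word reaches $\ww_0$ through a strictly descending chain of swaps, and conclude that any two words are joined through $\ww_0$.

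First I would record the effect of a single swap on $\DEES$. Turning a block $\R\D$ occupying positions $(k, k{+}1)$ into $\D\R$ moves that one $\D$ from position $k{+}1$ to position $k$ while leaving every other $\D$ fixed; hence $\DEES$ drops by exactly $1$, and the reverse swap raises it by $1$. Since $\DEES$ is the sum of $m$ distinct values drawn from $\{1,\dots,m+n\}$, it is minimized precisely when the $\D$s occupy positions $1,\dots,m$, that is, by $\ww_0$, with minimal value $\binom{m+1}{2}$.

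The key step is to show that every word $\ww \neq \ww_0$ admits a descending swap, i.e.\ contains an adjacent $\R\D$ block. If it did not, then no $\D$ could follow any $\R$ anywhere: were some $\D$ at position $j$ to sit after some $\R$ at position $i < j$, reading across the substring from $i$ to $j$ would expose a position where $\R$ is immediately followed by $\D$, a contradiction. So the absence of a swappable block forces all $\D$s before all $\R$s, i.e.\ $\ww = \ww_0$; contrapositively, any $\ww \neq \ww_0$ has an adjacent $\R\D$ whose swap strictly decreases $\DEES$. This is the only step where the combinatorial structure is used rather than mere bookkeeping, and is thus the main obstacle, though it is short.

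Finally, because $\DEES$ takes values in a well-ordered set and strictly decreases at each step, the descent from any starting word must terminate, and it can terminate only at $\ww_0$, the unique word admitting no descending swap. Thus every word connects to $\ww_0$. Since each swap move is invertible, reversing the descending chain from a second word $\ww'$ yields a chain from $\ww_0$ to $\ww'$; concatenating the two chains produces a swap-path from $\ww$ to $\ww'$. Hence $\P{m}{n}$ is connected under swaps.
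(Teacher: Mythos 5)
Your proposal is correct and follows exactly the argument the paper sketches: $\DEES$ drops by $1$ under each $\R\D \to \D\R$ swap, its minimum is attained only at $m\D n\R$, every other word admits a decreasing swap, and reversibility of swaps then gives connectivity through that hub. You have simply filled in the details (the existence of an adjacent $\R\D$ block and the termination argument) that the paper leaves implicit.
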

  \item The statistic $\AREA$ counts the number of unit squares below a given path. The
    following is well known (see, e.g., \cite{Stanley-EnumerativeCombinatorics-1}, p.~29):

  \begin{fact}
   \label{fact:BinomialArea}
    The generating function of $\AREA$ over all paths in $\P{m}{n}$ is the $\l$-binomial $\qbinom{m+n}{m}$. That is, the coefficient of $\l^a$ in $\qbinom{m+n}{m}$ equals the number of paths that cover exactly $a$ squares.
  \end{fact}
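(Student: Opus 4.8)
The plan is to show that the area generating function $A_{m,n}(\l) := \sum_{\ww\in\P{m}{n}} \l^{\AREA(\ww)}$ satisfies the same $\l$-Pascal recurrence and boundary conditions as $\qbinom{m+n}{m}$, and then to conclude by induction on $m+n$. The base cases are immediate: when $m=0$ or $n=0$ there is a single path, of area $0$, so $A_{0,n}=A_{m,0}=1$, matching the corresponding $\l$-binomials.

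For the recurrence I would condition on the first letter of $\ww$. If $\ww$ begins with $\D$, deleting that step gives a bijection onto $\P{m-1}{n}$; since the initial down-step runs along the left edge it encloses no square, so $\AREA$ is preserved and these paths contribute $A_{m-1,n}(\l)$. If instead $\ww$ begins with $\R$, deleting that step gives a bijection onto $\P{m}{n-1}$; but the initial right-step runs along the top edge, lying above the entire leftmost column of $m$ unit squares, each of which counts toward the area below $\ww$. Hence $\AREA$ drops by exactly $m$ upon deletion, and these paths contribute $\l^m A_{m,n-1}(\l)$. Together,
\[ A_{m,n}(\l) = A_{m-1,n}(\l) + \l^m\, A_{m,n-1}(\l), \]
which is precisely the identity $\qbinom{m+n}{m} = \qbinom{m+n-1}{m-1} + \l^m\qbinom{m+n-1}{m}$ after translating levels (note $(m-1)+n = m+(n-1) = m+n-1$). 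Induction then yields $A_{m,n}(\l)=\qbinom{m+n}{m}$.

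A variant that stays closer to the swap formalism of this section: a single swap $\R\D\to\D\R$ excises exactly one corner square, so it lowers $\AREA$ by $1$ — exactly as it lowers $\DEES$ by $1$. Thus $\AREA-\DEES$ is invariant under swaps, hence constant on $\P{m}{n}$ by Fact~\ref{fact:Pconnected}; evaluating at the base word $\D^m\R^n$ (area $0$, with $\DEES=\binom{m+1}{2}$) gives $\AREA=\DEES-\binom{m+1}{2}$ on every path. The statement then reduces to the standard subset evaluation $\sum_{\ww}\l^{\DEES(\ww)}=\l^{\binom{m+1}{2}}\qbinom{m+n}{m}$.

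I expect the only real point requiring care to be the area bookkeeping — verifying that the two deletions change $\AREA$ uniformly by $0$ and by $m$ respectively, and (for the variant) that a swap toggles precisely one square. This is geometric rather than algebraic; it becomes transparent once one passes to the equivalent description of a path by the partition $\lambda\subseteq m\times n$ lying below it, under which $\AREA(\ww)=|\lambda|$ and deleting the first step amounts to removing a row or a maximal column of the box. Everything else is a routine matching of recurrences and base cases.
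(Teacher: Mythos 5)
The paper does not actually prove this fact; it is stated as well known, with a citation to Stanley. Your proposal supplies a genuine proof, and it is correct. The main argument --- conditioning on the first letter, checking that deleting an initial $\D$ preserves $\AREA$ while deleting an initial $\R$ lowers it by exactly $m$, and matching the resulting recurrence $A_{m,n}=A_{m-1,n}+\l^{m}A_{m,n-1}$ with the $\l$-Pascal identity $\qbinom{m+n}{m}=\qbinom{m+n-1}{m-1}+\l^{m}\qbinom{m+n-1}{m}$ --- is the standard textbook route (essentially the one behind the cited reference, where $\AREA$ is read off as the size of the partition $\lambda$ inside the $m\times n$ box), and your geometric bookkeeping for the two deletions is right under the paper's convention that $\AREA(m\D n\R)=0$. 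Your swap-based variant is also consistent with the paper: it is exactly the observation the authors make between Facts~\ref{fact:BinomialArea} and~\ref{fact:AREAandDEESequidistributed} (a swap changes $\AREA$ and $\DEES$ by the same $\pm 1$), leading to their Fact~\ref{fact:AREAandDEESequidistributed}; just be aware that this variant only \emph{transfers} the problem to the evaluation $\sum_{\ww}\l^{\DEES(\ww)}=\l^{\binom{m+1}{2}}\qbinom{m+n}{m}$, which is an equivalent known identity rather than something already established in the paper, so the first argument is the one that stands on its own.
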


  Every swap changes both $\DEES$ and $\AREA$ by the same amount $-1$ or $+1$. Since $\AREA(m\D n\R) = 0$, and $\DEES(m\D n\R) = \tfrac{m(m+1)}{2}$, we find

  \begin{fact}
   \label{fact:AREAandDEESequidistributed}
    For every path $p \in \P{m}{n}$, $\DEES(p) = \AREA(p) + m(m+1)/2$. In particular, $\DEES$ and $\AREA$ are equidistributed up to a shift by $\l^{m(m+1)/2}$.
  \end{fact}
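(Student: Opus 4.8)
The plan is to show that the difference $\DEES(p)-\AREA(p)$ is constant across all of $\P{m}{n}$, and then read off equidistribution as an immediate corollary. The whole argument rests on two ingredients that are essentially already in hand: the swap-invariance of this difference, and the connectivity of $\P{m}{n}$ under swaps (Fact~\ref{fact:Pconnected}).

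First I would confirm the local behavior of the two statistics under a single swap. Converting a block $\R\D$ into $\D\R$ moves the unique $\D$ it contains one position to the left, so $\DEES$ drops by exactly $1$; geometrically the same swap pushes the path toward the lower-left, deleting precisely one unit square from the region below it, so $\AREA$ also drops by exactly $1$ (and the reverse swap raises each by $1$). Hence the signed change in $\DEES$ equals the signed change in $\AREA$ for every swap, and consequently $\DEES(p)-\AREA(p)$ is a swap-invariant of $p$.

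Next I would evaluate this invariant at the extremal word $m\D n\R$. There the $\D$'s occupy positions $1,2,\dots,m$, so $\DEES(m\D n\R)=m(m+1)/2$, while the path hugs the lower-left boundary and encloses no squares, giving $\AREA(m\D n\R)=0$; thus the invariant equals $m(m+1)/2$ at this word. Since Fact~\ref{fact:Pconnected} guarantees that any $p\in\P{m}{n}$ can be joined to $m\D n\R$ by a finite chain of swaps, and the difference is unchanged along that chain, we obtain $\DEES(p)=\AREA(p)+m(m+1)/2$ for every $p$, which is the pointwise identity claimed.

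Finally, the equidistribution statement follows formally: summing the pointwise identity in the form $\l^{\DEES(p)}=\l^{m(m+1)/2}\,\l^{\AREA(p)}$ over all $p\in\P{m}{n}$ shows that the generating function of $\DEES$ is $\l^{m(m+1)/2}$ times that of $\AREA$; by Fact~\ref{fact:BinomialArea} the latter is $\qbinom{m+n}{m}$, so the former is $\l^{m(m+1)/2}\,\qbinom{m+n}{m}$. There is no real obstacle here. If any step warrants care, it is the geometric verification that a single swap alters $\AREA$ by exactly one square (the corresponding claim for $\DEES$ being purely combinatorial and immediate), but this is a one-cell local comparison rather than a genuine difficulty.
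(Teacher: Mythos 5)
Your proof is correct and takes essentially the same route as the paper: the paper likewise observes that every swap changes $\DEES$ and $\AREA$ by the same amount, evaluates both statistics at the extremal word $m\D n\R$, and invokes connectivity under swaps (Fact~\ref{fact:Pconnected}) to propagate the constant difference $m(m+1)/2$. Your added derivation of the generating-function consequence via Fact~\ref{fact:BinomialArea} is exactly the intended reading of the equidistribution claim.
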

  \item The statistic $\CORNERS$ counts the number of occurrences of the pair $\R\D$ in a
    path; i.e., the number of grid nodes where a horizontal step is followed by a vertical step. We reserve the denomination of {\em corner} to such nodes; in particular, ``corners'' of the type $\D\R$ will {\em not} be considered (but see \S~\ref{sect:Final}).
  \item The {\em c-index} of a corner is its distance (in the grid) from the starting node
    $S$, or equivalently, the sum of the corner's coordinates. The statistic $\CINDEX$ is the sum of the c-indices of all corners in a path. In terms of words, we mark the places where $\R$ is followed by $\D$, and sum the positions of the marks:
  \[\label{pg:sampleWord}
    p =
    \R \underset{1}{\phantom{\bullet}} \R \underset{2}{\bullet} \D
      \underset{3}{\phantom{\bullet}} \R \underset{4}{\phantom{\bullet}} \R \underset{5}{\bullet} \D \quad,\qquad \CINDEX(p) =
    2+5 = 7
  \]
  It is important to highlight how, in contrast to counting letter positions (as when evaluating $\DEES$), here we enumerate the {\em spaces between} letters. We call these spaces {\em gaps}.
\end{enumerate}

Figure~\ref{fig:qt-2x4} depicts the $\binom{2+4}{2} = 15$ paths in $\P{2}{4}$, together with their corresponding words, and values for $\DEES$, $\AREA$, $\CORNERS$, and $\CINDEX$. In this concrete example it is easy to verify that the coefficients of $\qbinom{2+4}{2} = 1 + \l + 2\l^2 + 2\l^3 + 3\l^4 + 2\l^5 + 2\l^6 + \l^7 + \l^8$ describe both the $\AREA$ and $\CINDEX$ statistics. \underline{This is a general truth}:
\begin{fact}
 \label{fact:AREAandCINDEXequidistributed}
  The generating function of $\CINDEX$ over all paths in $\P{m}{n}$ is $\qbinom{m+n}{m}$; i.e., $\CINDEX$ is equidistributed with $\AREA$.
\end{fact}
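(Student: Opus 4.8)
The plan is to refine the enumeration of paths by their number of corners and to match each piece against a single term of the symmetric Vandermonde convolution. Specializing~\eqref{eqn:Our-qVandermonde} to $d=r=0$ collapses it to $\qbinom{m+n}{m} = \sum_{c} \qbinom{m}{c}\qbinom{n}{c}\,\l^{c^2}$, so it suffices to show that the paths carrying exactly $c$ corners contribute $\qbinom{m}{c}\qbinom{n}{c}\,\l^{c^2}$ to the generating function of $\CINDEX$.

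First I would record the geometry of the corners. Reading a path $p\in\P{m}{n}$ from $S$ to $F$, list its corners in order; the $k$-th corner sits at a node $(i_k,j_k)$ entered by a right step and left by a down step, so its c-index is the coordinate sum $i_k+j_k$ and hence $\CINDEX(p)=\sum_k(i_k+j_k)$. Monotonicity forces $i_1<\dots<i_c$ and $j_1<\dots<j_c$: after the down step of corner $k$ the path stands in column $j_k$, row $i_k+1$, and thereafter moves only right or down, so reaching the next corner (which is entered by a right step) requires $i_{k+1}\ge i_k+1$ and $j_{k+1}\ge j_k+1$. Together with $i_1\ge 0$, $i_c\le m-1$, $j_1\ge 1$, $j_c\le n$, this produces a $c$-subset $I=\{i_1<\dots<i_c\}$ of $\{0,\dots,m-1\}$ and a $c$-subset $J=\{j_1<\dots<j_c\}$ of $\{1,\dots,n\}$.

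Next I would prove that the assignment $p\mapsto(I,J)$ is a bijection onto all such pairs. Given $I$ and $J$ one recovers the word $\D^{i_1}\R^{j_1}\D^{i_2-i_1}\R^{j_2-j_1}\cdots\D^{m-i_c}\R^{n-j_c}$; because the $i_k$ and $j_k$ increase strictly, each interior $\R$-run is followed by a nonempty $\D$-run, so this word has exactly the corners $(i_k,j_k)$ and no others. Conversely, between two consecutive corners a path contains no $\R\D$ factor, hence that stretch is a block of down steps followed by a block of right steps; this pins the word down to the one above, giving injectivity. This reconstruction step is the one I expect to be the real obstacle: one must check with care that ``no intermediate corner'' truly forces the $\D^{*}\R^{*}$ shape of each stretch and that no spurious $\R\D$ corner is created at the junctions, which is exactly where the strict inequalities $i_k<i_{k+1}$ and $j_k<j_{k+1}$ are needed.

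Finally I would sum. Under the bijection $\CINDEX(p)$ becomes $\sum I+\sum J$, so the $c$-corner contribution factors as $\bigl(\sum_{I}\l^{\sum I}\bigr)\bigl(\sum_{J}\l^{\sum J}\bigr)$. The standard generating functions for $c$-subsets by element sum (equivalently, partitions in a box) give $\sum_{I}\l^{\sum I}=\l^{\binom{c}{2}}\qbinom{m}{c}$ and $\sum_{J}\l^{\sum J}=\l^{\binom{c+1}{2}}\qbinom{n}{c}$, whose product is $\l^{c^2}\qbinom{m}{c}\qbinom{n}{c}$ since $\binom{c}{2}+\binom{c+1}{2}=c^2$. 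Summing over $c$ and invoking the specialized convolution above yields $\sum_{p}\l^{\CINDEX(p)}=\qbinom{m+n}{m}$, as claimed.
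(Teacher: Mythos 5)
Your proposal is correct and follows essentially the same route as the paper: stratify paths by their number $c$ of corners, identify each $c$-corner path with a pair of $c$-subsets of horizontal lines $\{0,\dots,m-1\}$ and vertical lines $\{1,\dots,n\}$, compute the subset-sum generating functions to get $\qbinom{m}{c}\qbinom{n}{c}\,\l^{c^2}$ per stratum, and sum via the symmetric Vandermonde convolution~\eqref{eqn:Our-qVandermonde} with $d=r=0$. You simply spell out the reconstruction bijection and the $\binom{c}{2}+\binom{c+1}{2}=c^2$ bookkeeping in more detail than the paper's sketch does.
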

\begin{figure}[h]
  \includegraphics[width=5.5in]{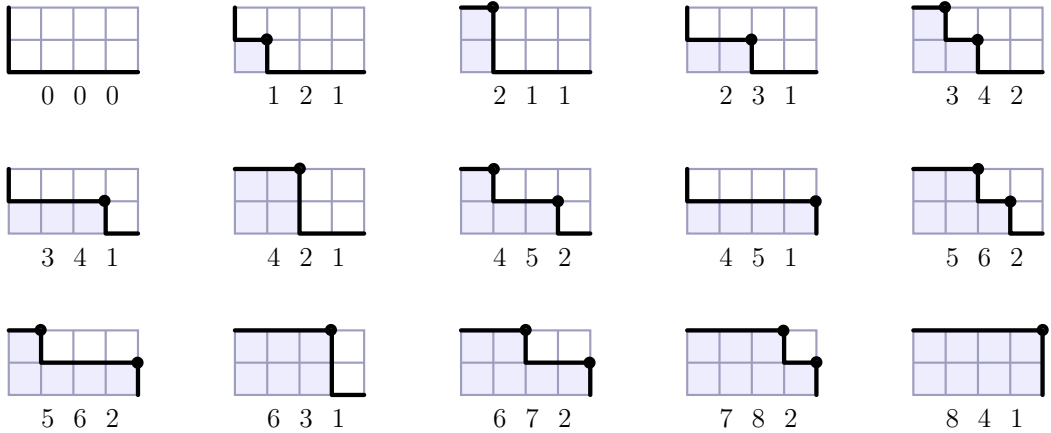}
  \caption{Below each of the 15 paths in $\P{2}{4}$ are the statistics $\AREA,\,\CINDEX$, and $\CORNERS$. The distribution of {\em both} $\AREA$ and $\CINDEX$ is described by $1+\l+2\l^2+2\l^3+3\l^4+2\l^5+2\l^6+\l^7+\l^8$, even though their values match {\em only} on the first path. As noted after equation~\eqref{eqn:qt-2x4}, the distribution of $\CORNERS$ is $1,8,6$. \hfill {\footnotesize($\DEES$ not shown)} }
 \label{fig:qt-2x4}
\end{figure}

\medskip\noindent
{\bf Proof sketch of Fact~\ref{fact:AREAandCINDEXequidistributed}.}
Only those nodes to the right of $S$ and above $F$ can hold a corner; i.e., corners are confined to the sub-grid delineated by horizontal lines $0$ through $m-1$, and vertical lines $1$ through $n$. A path with $c$ corners is uniquely determined by a choice of $c$ horizontal, and $c$ vertical lines in this sub-grid (consecutive horizontal/vertical pairs will form consecutive corner coordinates, which then determine the path). Since the c-index of a corner is precisely the sum of these horizontal and vertical coordinates, Fact~\ref{fact:AREAandDEESequidistributed} gives by induction that the generating function of $\CINDEX$, {\em restricted} to paths with $c$ corners, is $\big( \qbinom{m}{c}\,\l^{c(c+1)/2} \big) \cdot \big(\qbinom{n}{c}\, \l^{(c-1)c/2} \big)$. Notice that the powers of $\l$ add up to $c^2$, so the generating function for $\CINDEX$ over {\em all} paths in $\P{m}{n}$ is
\begin{equation}
 \label{eqn:1stVandermonde}
  \sum_{p \in \P{m}{n}} \l^{\CINDEX(p)} =
  \sum_{c} \qbinom{m}{c} \qbinom{n}{c}\, \l^{c^2},
\end{equation}
which, according to~\eqref{eqn:Our-qVandermonde} (with $d=r=0$), equals $\qbinom{m+n}{m}$ as claimed. \qed

\subsection{A polynomial in two variables}
\label{sect:qtBinomials}
Not only did the argument above establish that $\CINDEX$ is equidistributed with $\AREA$, but it actually unearthed a finer gradation of $\qbinom{m+n}{m}$ that {\em distinguishes paths} with different values of $\CORNERS$\footnote{An analogous gradation exists for $\AREA$. In that case, $c$ represents the side of the largest square sitting at $(m,0)$, that fits under a given path $p$. Then the analog of~\eqref{eqn:1stVandermonde} is obtained by induction, using the square's inner corner to split $p$ into two shorter sub-paths. Details are left to the reader.}. Thus, we may include the factor $\m^c$ in each summand of~\eqref{eqn:1stVandermonde}, to obtain the 2-variable polynomial
\begin{equation}
 \label{eqn:1st-qtVandermonde}
  \qtbinom{m+n}{m} =
  \sum_{c} \qbinom{m}{c} \qbinom{n}{c}\, \l^{c^2}\m^c,
\end{equation}
where the coefficient of $\l^a \m^b$ counts paths $p$ with $\CINDEX(p) = a$, and $\CORNERS(p) = b$. 

For instance (ref. Figure~\ref{fig:qt-2x4Scrambled}), we see from Figure~\ref{fig:qt-2x4} that $\qtbinom{2+4}{2}$ equals $\big( 1 + \l\m + 2\l^2\m + 2\l^3\m + 2\l^4\m + \l^4\m^2 + \l^5\m + \l^5\m^2 + 2\l^6\m^2 + \l^7\m^2 + \l^8\m^2\big)$; a result that we will most often write in the form
\begin{equation}
 \label{eqn:qt-2x4}
  \qtbinom{2+4}{2} =
  1 \,+\,
    \big( \l + 2\l^2 + 2\l^3 + 2\l^4 + \l^5 \big)\,\m \,+\,
    \big( \l^4 + \l^5 + 2\l^6 + \m^7 + \m^8 \big)\,\m^2,
\end{equation}
because we are interested in recovering the $\CINDEX$ count when paths are restricted to a given number of corners. Notice that $\m=1$ recovers $\qbinom{2+4}{2}$, while $\l=1$ yields $1+8\m+6\m^3$, the generating function of $\CORNERS$ on $\P{2}{4}$.

\section{Scramblers}
\label{sect:Ornaments}
Let $\O = (H,V)$  be a pair of sets $H = \{h_1,\ldots, h_d \} \subset \{0,\ldots,m-1\}$, and $V =\{v_1,\ldots, v_r \} \subset \{1,\ldots,n\}$. For later use, we let $s$ stand for the {\em ornament sum} $\sum h_i + \sum v_j$, and $Q$ for the difference $d-r$ (as in \S\ref{sect:Vandermonde}).

For every $h_i \in H$, place an ornament to the right of node $(h_i,n) \in \G{m}{n}$. For every $v_j \in V$ place an ornament above node $(0,v_j)$. We visualize the ornaments as laser guns shining along their respective horizontal/vertical lines (cf.~\cite{Lyness-1941}). Each ray hits a given path $p$ at a specific node, which we promote to a ``virtual'' corner. We denote by $W^{\O}$ any subset $W \subset \P{m}{n}$ where all paths have virtual corners additionally marked. Since the values of both $\CORNERS$ and $\CINDEX$ are altered, we call $\O$ a {\em scrambler} on $\G{m}{n}$. Our goal is to to describe the generating function $\qtbinom{m+n}{m}^{\O}$ of the scrambled bi-statistic $(\CINDEX,\CORNERS)$.

Let us upgrade the notation for path words. An ornament on the vertical line $v_j$ guarantees a corner (true or virtual) {\em after} the $v_j$-th $\R$-step. We highlight this fact by styling the $v_j$-th copy of $\R$ thus: $\Ro$. Similarly, an ornament on the horizontal line $h_i$ guarantees a corner {\em before} the $h_i$-th $\D$-step, so the $h_i$-th copy of $\D$ is styled $\oD$.

\begin{figure}[h]
  \includegraphics[width=5.5in]{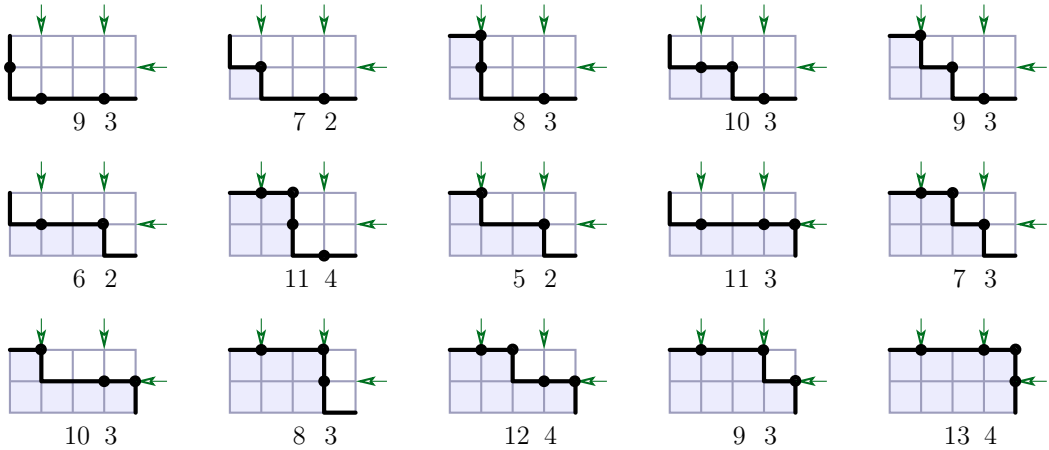}
  \caption{The 15 paths in $\P{2}{4}^{( \{1\},\,\{1,3\} )}$. Since $\AREA$ is as in Figure~\ref{fig:qt-2x4}, only $\CINDEX$ and $\CORNERS$ are displayed. As per the comment after equation~\eqref{eqn:qt-2x4Scrambled}, the  distribution of $\CORNERS$ is now $3,9,3$, in contrast to the $1,8,6$ of Figure~\ref{fig:qt-2x4}.}
 \label{fig:qt-2x4Scrambled}
\end{figure}

As illustration, Figure~\ref{fig:qt-2x4Scrambled} redoes the enumeration of Figure~\ref{fig:qt-2x4}, modified by the scrambler $\O = \big( \{1\},\,\{1,3\} \big)$. The sample path of page~\pageref{pg:sampleWord} now has two new virtual corners at gaps 1 and 4 (the corner at gap 5 is true, even though it is marked by a horizontal ornament):
\[
  p =
  \Ro \underset{1}{\phantom{\bullet}} \R \underset{2}{\bullet} \D
    \underset{3}{\phantom{\bullet}} \Ro \underset{4}{\phantom{\bullet}} \R \underset{5}{\bullet} \oD \qquad \CINDEX(p) =
  1+2+4+5 = 12
\]

The resulting generating function on $\P{2}{4}^{(\{1\},\,\{1,3\})}$ is

\begin{equation}
 \label{eqn:qt-2x4Scrambled}
  \qtbinom{2+4}{2}^{(\{1\},\,\{1,3\})} =
  (\l^5+\l^6+\l^7) \m^2 +
  (\l^7+2\l^8+3\l^9+2\l^{10}+\l^{11}) \m^3 +
  (\l^{11}+\l^{12}+\l^{13}) \m^4.
\end{equation}

Just as we noted apropos of~\eqref{eqn:qt-2x4}, we can substitute $\l=1$ to reveal 3 paths with two corners, 9 paths with three corners, and 3 paths with four corners: the distribution of $\CORNERS$ has changed. However, the distribution of $\CINDEX$ still coincides with $\qbinom{2+4}{2}$, only shifted by a constant factor of $\l^5$.

\newpage

After this illustrative example, we are ready to state our main result (recall that the ornament sum $s$ equals $\sum h_i + \sum v_j$).

\begin{Thm}
 \label{thm:Main}
  For any scrambler $\O = (H,V)$ on $G_{m,n}$, with $|H| = d$ and $|V| = r$, the generating function of the bi-statistic $(\CINDEX,\CORNERS)$ over all paths in $\P{m}{n}^{\O}$ is the polynomial
  \begin{equation}
   \label{eqn:Main}
    \qtbinom{m+n}{m}^{\O} =
    \l^s \cdot \sum_c \left\{
      \qbinom{m+r-d}{c-d} \cdot \qbinom{n+d-r}{c-r} \cdot \l^{(c-d)(c-r)} \cdot \m^c
    \right\}.
  \end{equation}
\end{Thm}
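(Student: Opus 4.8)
The plan is to fix the corner count $c$ and compute the $\l$-generating function of $\CINDEX$ over exactly those paths in $\P{m}{n}^{\O}$ that carry $c$ corners, aiming to show it equals $\l^s\cdot\qbinom{m+r-d}{c-d}\qbinom{n+d-r}{c-r}\l^{(c-d)(c-r)}$; weighting by $\m^c$ and summing over $c$ then gives~\eqref{eqn:Main}, whose bracketed sum one recognizes as the symmetric Vandermonde convolution~\eqref{eqn:Our-qVandermonde}. This mirrors the derivation of~\eqref{eqn:1st-qtVandermonde}, where the unornamented count was obtained by splitting the corner data into an independent choice of corner-rows and corner-columns. The whole task is to carry out the same split in the presence of the laser rays, and to see why it merely shifts the two binomial factors and multiplies by $\l^s$.

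The first step is a \emph{corner classification}. Reading a path as a word, each corner of a scrambled path is of exactly one of three kinds: a true corner $\R\D$; a \emph{vertical} virtual corner, whose gap is immediately preceded by an $\R$ marked $\Ro$ (the first node reached on a column $v_j\in V$); or a \emph{horizontal} virtual corner, whose gap is immediately followed by a $\D$ marked $\oD$ (the last node on a row $h_i\in H$). Since a vertical virtual corner has $\R$ just before its gap and a horizontal one has $\D$ just after its gap, two virtual corners of different type can share a gap only when that gap is itself a true corner, and distinct rays of the same type never collide. Hence, if a path has $t$ true corners, then exactly $r-|V\cap\{\text{true-corner columns}\}|$ fresh vertical corners and $d-|H\cap\{\text{true-corner rows}\}|$ fresh horizontal corners appear, and $c$ decomposes accordingly. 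I expect this to be the bookkeeping that promotes the plain pools counted by $\qbinom{m}{c}$ and $\qbinom{n}{c}$ to the larger pools $\qbinom{m+r-d}{c-d}$ and $\qbinom{n+d-r}{c-r}$: the $r$ forced columns and $d$ forced rows enlarge, respectively, the sets from which the remaining corner-coordinates are chosen, while the $-d$ and $-r$ record coordinates already committed to ornaments.

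To isolate the prefactor $\l^s$, I would invoke the paper's advertised bijections between differently ornated copies of $\G{m}{n}$. The goal is to slide every ray into a convenient canonical position by elementary moves, each shifting one ray across one grid line, preserving $\CORNERS$, and transforming the $\CINDEX$-distribution by a single power of $\l$ equal to the displacement; composed, they account for all of $\l^s$. Only the canonical scrambler then needs to be evaluated, and there the forced corners sit on prescribed lines, so the row/column-choice count of Fact~\ref{fact:AREAandCINDEXequidistributed} should go through verbatim with the shifted binomials, the exponent $(c-d)(c-r)$ emerging as the combined minimal offset of the two factors.

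The hard part will be the sliding bijections. Writing $\mathrm{top}_v$ for the row at which the path first reaches column $v$, moving a vertical ray from column $v$ to column $v-1$ changes the c-index of its virtual corner by $\mathrm{top}_{v-1}-\mathrm{top}_v-1$, which equals $-1$ only when the path does not descend on column $v-1$; in general the shift overshoots, so the map cannot be the identity on paths. I expect to repair this with an explicit local re-routing: a controlled sequence of swaps in the block straddling columns $v-1,v$ that trades the offending descent for the missing units of c-index, arranged as a $\CORNERS$-preserving pairing that shifts the \emph{multiset} of $\CINDEX$-values by exactly one. Checking that these moves compose consistently, never create or destroy a corner—in particular that they behave correctly when the moved ray passes a true corner or another ray—is the crux. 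Once that is in place, recognizing the canonical sum as~\eqref{eqn:Our-qVandermonde} finishes the proof.
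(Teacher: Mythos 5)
Your overall strategy --- slide the rays to a canonical position via $\CORNERS$-preserving bijections that shift the $\CINDEX$ multiset by one unit per step, then evaluate the canonical scrambler directly --- is essentially the strategy of the paper, and you have correctly diagnosed the central difficulty: moving a ray by one line does \emph{not} shift $\CINDEX$ by $1$ on each individual path, so the bijection must be a nontrivial permutation of $\P{m}{n}$ that realigns the multiset of values. But this diagnosis is where your argument stops; the construction you defer to (``a controlled sequence of swaps \ldots arranged as a $\CORNERS$-preserving pairing'') is precisely the content of Lemmas~\ref{lemma:OrnamentShift-V} and~\ref{lemma:OrnamentShift-H}, whose proof partitions $\P{m}{n}$ into classes ${}_{\wu}S_{\wv}$ of $|\wa\wb|+1$ words with a single ``floating'' letter, imposes two opposite linear orders on each class, and matches the two lists after a cyclic offset. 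Nothing in your proposal indicates how the pairing would be built or why it exists, and since this is the technical heart of the theorem, the proposal as it stands is an outline with the main lemma missing.

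There is a second, more structural gap. Sliding alone never changes $d$ or $r$, so your canonical scrambler is still of mixed type, say $H=\{0,\ldots,d-1\}$, $V=\{1,\ldots,r\}$, and you propose to evaluate it by letting the row/column-choice count of Fact~\ref{fact:AREAandCINDEXequidistributed} ``go through verbatim with the shifted binomials.'' It does not go through verbatim: with ornaments present the rows of the virtual corners are determined by the path rather than freely chosen, and for mixed scramblers there are classes with $c<d+r$ (a horizontal and a vertical ray can land on the same true corner), which is exactly the complication flagged in Remark~\ref{rmk:StayWith-d=0}. The paper avoids this by first applying Lemma~\ref{lemma:OrnamentPairCancellation}, a bijection that \emph{removes one horizontal and one vertical ornament simultaneously} while decreasing both $\CORNERS$ and $\CINDEX$ by $1$, reducing to a pure scrambler ($H=\varnothing$ or $V=\varnothing$); only then does Proposition~\ref{prop:ReducedVandermonde} carry out the count, and even there not by choosing rows and columns but by a two-stage insertion of $\D$s into the string $\Ro\cdots\Ro\R\cdots\R$, yielding the factors $\qbinom{n-r}{c-r}$ and $\qbinom{m+r}{c}$ separately. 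Your proposal contains no analogue of the cancellation lemma, so the base case you would be left to evaluate is substantially harder than you anticipate. Your corner classification and the bookkeeping identifying the enlarged pools $\qbinom{m+r-d}{c-d}$, $\qbinom{n+d-r}{c-r}$ are correct in spirit, but they are motivation, not proof.
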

\smallskip\begin{center}{\small $\big( \text{recall }s=\sum h_i+\sum v_j \big)$}\end{center}

\medskip\noindent{\bf Observations.}
Of course, \eqref{eqn:1st-qtVandermonde} is the special case of this equation with an empty scrambler. Notice as well that substituting $\m=1$ reduces $\qtbinom{m+n}{m}^{\O}$ to the symmetric version~\eqref{eqn:Our-qVandermonde} of the $q$-Vandermonde formula, except for the extra factor $\l^s$. In other words,

\begin{Cor}
 \label{corol:FullEquidistributionClaim}
  The statistic $\CINDEX$ over $\P{m}{n}^{\O}$ is equidistributed with $\CINDEX$ over $\P{m}{n}$ (and thus, with $\AREA$ as well), up to a shift by the factor $\l^s$.
\end{Cor}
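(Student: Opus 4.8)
The plan is to obtain the Corollary as an immediate specialization of the Main Theorem, by marginalizing out the $\CORNERS$ statistic. First I would recall that, by construction, $\qtbinom{m+n}{m}^{\O}$ is the generating function of the \emph{bi}-statistic $(\CINDEX,\CORNERS)$: the coefficient of $\l^a\m^b$ is the number of paths $p \in \P{m}{n}^{\O}$ with $\CINDEX(p)=a$ and $\CORNERS(p)=b$. Hence the substitution $\m=1$ sums these coefficients over all corner counts $b$, and produces exactly the univariate generating function of $\CINDEX$ alone over $\P{m}{n}^{\O}$.

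Next I would carry out that substitution in Formula~\eqref{eqn:Main}. Putting $\m=1$ deletes the factor $\m^c$ from each summand, leaving
\[
  \l^s \cdot \sum_c \qbinom{m+r-d}{c-d} \cdot \qbinom{n+d-r}{c-r} \cdot \l^{(c-d)(c-r)}.
\]
The sum displayed here is, term for term, the right-hand side of the symmetric $\l$-Vandermonde formula~\eqref{eqn:Our-qVandermonde}, and so it collapses to the single $\l$-binomial $\qbinom{m+n}{m}$. Therefore the generating function of $\CINDEX$ over the ornated paths $\P{m}{n}^{\O}$ is precisely $\l^s \cdot \qbinom{m+n}{m}$.

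Finally I would interpret this identity. By Fact~\ref{fact:AREAandCINDEXequidistributed}, the bare binomial $\qbinom{m+n}{m}$ is the generating function of $\CINDEX$ over the un-ornated paths $\P{m}{n}$ (and, through Fact~\ref{fact:BinomialArea}, of $\AREA$ as well). Multiplication by $\l^s$ has the sole effect of shifting every exponent---that is, every $\CINDEX$ value---upward by the constant $s$. This is exactly the asserted equidistribution up to the factor $\l^s$, completing the argument.

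I do not expect a genuine obstacle, since once Theorem~\ref{thm:Main} is granted the proof is a one-line substitution. The only two points meriting care are conceptual rather than computational: first, the bookkeeping remark that setting $\m=1$ correctly forgets the $\CORNERS$ data while retaining the $\CINDEX$ distribution; and second, the recognition that the resulting $\l$-sum is an \emph{instance} of~\eqref{eqn:Our-qVandermonde}---with the identifications $x=m+r-d$ and $y=n+d-r$ already built into the Theorem---so that no fresh evaluation of a Vandermonde convolution is needed.
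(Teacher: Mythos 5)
Your proposal is correct and matches the paper's own (very brief) justification exactly: the paper derives the Corollary by setting $\m=1$ in Formula~\eqref{eqn:Main} and observing that the resulting sum is the symmetric $\l$-Vandermonde convolution~\eqref{eqn:Our-qVandermonde}, hence equals $\qbinom{m+n}{m}$, leaving only the shift $\l^s$. Your added remarks on marginalizing out $\CORNERS$ simply make explicit what the paper leaves implicit; no difference in substance.
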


\section{Proof of the Theorem}
 \label{sect:Proof}

\subsection{Strategy of proof}
\label{sect:ProofStrategy}
We will proceed by restricting attention to the set of scramblers with a fixed value of the difference $d-r$ (which we called $Q$ in \S\ref{sect:Defns}). First, given $\O$ and $\O'$ with $d-r = d'-r'$, we use lemmas~\ref{lemma:OrnamentPairCancellation} and~\ref{lemma:OrnamentShift-H},~\ref{lemma:OrnamentShift-V} to construct an explicit bijection between $\P{m}{n}^{\O}$ and $\P{m}{n}^{\O'}$, so that corresponding paths have $\CINDEX$ differing by the constant value $s-s'$. This implies equidistribution of $(\CINDEX,\CORNERS)$ between $\P{m}{n}^{\O}$ and $\P{m}{n}^{\O'}$, up to the correct shift in $\l$ and $\m$. Afterward, we consider the scrambler with given $Q$, which has smallest ornament sum $s$. It has only one kind of ornaments, either horizontal or vertical, all at their lowest positions. In Proposition~\ref{prop:ReducedVandermonde} we will prove formula~\eqref{eqn:Main} in just such a situation, thus establishing the overall result.

\subsection{A pair of unscrambling bijections}
In the first auxiliary lemma below, notice that the differences $d-r$ and $d'-r'$ are equal, while the ornament sums satisfy $s'=s-1$.
\begin{lemma}
 \label{lemma:OrnamentPairCancellation}
   Let $\O = \{H,V\}$ be a scrambler such that $H$ and $V$ include the ornaments $0$ and $1$ respectively. Let $\O'$ be the modified scrambler where those two ornaments are removed. Then there is a bijection from $\P{m}{n}^{\O}$ to $\P{m}{n}^{\O'}$ that decreases both $\CORNERS$ and $\CINDEX$ by $1$.
\end{lemma}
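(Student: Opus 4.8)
The plan is to locate precisely where the two extra ornaments strike, and then repair the identity map on the paths where it fails. The row-$0$ ornament lights up the rightmost node of the top row, i.e.\ the node just before the first $\D$; its c-index is the number $j$ of leading $\R$'s, and $j=0$ exactly when $p$ begins with $\D$. The column-$1$ ornament lights up the topmost node of the first column, i.e.\ the node just after the first $\R$; its c-index is $i+1$, where $i$ is the number of leading $\D$'s, and $i=0$ exactly when $p$ begins with $\R$. Since always one of $i,j$ vanishes, both marks sit at the top-left of the grid. The column-$1$ mark, lying on column $1$, is hit by no surviving ornament of $\O'$ (whose columns are all $\geq 2$); this non-interference is what makes the $\R\R$-initial paths easy.

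First I would dispose of the paths beginning with $\R\R$. For such $p$ the column-$1$ mark sits on an $\R\R$-node, hence is a genuine virtual corner of c-index $1$ that no ornament of $\O'$ can re-create, whereas the row-$0$ mark falls on the true corner preceding the first $\D$, which remains a corner after the ornaments are deleted. Deleting the two ornaments therefore destroys exactly the corner at $(0,1)$ and nothing else, so the identity map already realizes the required shift of $(-1,-1)$ in $(\CINDEX,\CORNERS)$, whatever the remaining ornaments of $\O'$.

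For every other initial pattern the two marks collide with true corners or with each other: for an $\R\D$-start both ornaments land on the single true corner $(0,1)$, while for a $\D$-start the row-$0$ mark is the c-index-$0$ virtual corner at the origin and the column-$1$ mark sits at $(i,1)$ with $i\geq 1$. In each of these cases the identity over- or under-counts, so a genuine rearrangement is needed. The map I would build relocates the first $\R$ and the first $\D$ within the leading run (a bounded sequence of swaps) so that exactly one corner disappears and the discarded c-index is exactly $1$; concretely, I would send an $\R\D$-start $\R\D x$ to the $\D\R$-start $\D\R x$, and then define the map on the $\D$-initial paths so as to cover the leftover target set, certifying the whole construction by writing down its inverse on each initial-pattern class.

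The main obstacle is precisely this last step. The tempting uniform rule ``swap the first $\R$ with the first $\D$'' is not injective: it sends $\D^{i}\R y$ to $\R\D^{\,i}y$, so as soon as the suffix $y$ begins with $\D$ two different values of $i$ produce the same image; and in any case such a rule cannot be a bijection of $\P{m}{n}$, since the classes ``starts with $\R$'' and ``starts with $\D$'' have unequal sizes $\binom{m+n-1}{m}$ and $\binom{m+n-1}{m-1}$. The rearrangement is therefore forced to keep the $\R\R$-initial paths fixed and redistribute only the remainder, which pushes the real work onto an inductive (or carefully bookkept) definition over the $\D$-initial paths. The delicate points to verify are that this rearrangement is a genuine bijection, that its effect on $\CINDEX$ is uniformly $-1$ across all sub-cases, and that it never disturbs the virtual corners contributed by the surviving ornaments of $\O'$.
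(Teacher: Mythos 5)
Your setup is sound and your analysis of where the two deleted ornaments strike is correct, including the observation that the identity map realizes the shift $(-1,-1)$ on every path beginning with $\R\R$ (this is exactly the singleton-class case of the paper's argument). But there is a genuine gap, and in fact the one concrete non-identity rule you do write down is wrong. Take $m=2$, $n=1$, $H=\{0\}$, $V=\{1\}$, so that $\O'$ is the empty scrambler. The path $\R\D\D$ becomes $\Ro\,\oD\,\D$ under $\O$: both ornaments land on the true corner at $(0,1)$, so $(\CINDEX,\CORNERS)=(1,1)$. Your rule sends it to $\D\R\D$, whose only corner sits at $(1,1)$, giving $(2,1)$ --- an offset of $(+1,0)$, not $(-1,-1)$. (The correct image is $\D\D\R$, with $(0,0)$.) The rule $\R\D x\mapsto\D\R x$ only ``discards a corner of c-index $1$'' when $x$ begins with $\R$; when $x$ begins with $\D$ the corner merely slides from c-index $1$ to c-index $2$. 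Beyond this, the map on the $\D$-initial paths --- which you yourself identify as ``the main obstacle'' carrying ``the real work'' --- is never defined, so no bijection has actually been constructed.

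The paper escapes both difficulties by partitioning $\P{m}{n}$ along different lines: each class $S_{\wv}$ consists of the $|\wa\wb|+1$ words $\wa\R\wb\,\wv$ obtained by letting the first $\R$ float through the sea of $\D$s preceding the second $\R$ (the suffix $\wv$ held fixed). Such a class mixes one $\R$-initial word with many $\D$-initial ones --- precisely the mixing that a classification by initial letter forbids, and precisely what is needed to absorb the mismatch in your example. Each class is then linearly ordered twice, once for $\O$ and once for $\O'$ (the two orders differ only in where the word whose floating $\R$ abuts $\wv$ is placed), and the bijection matches words by position. Along either order $\CINDEX$ increases by exactly $1$ per step and $\CORNERS$ is piecewise constant with a single jump, so the offset between matched words is constant across the class and can be read off at the endpoints, where it equals $(-1,-1)$. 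To salvage your approach you would essentially have to rediscover this class structure; a map defined case by case on the initial letter, with $\R\R$-initial paths fixed, cannot succeed, as the counterexample above already shows at the level of the $\R\D$-initial paths.
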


\begin{Cor}
 \label{corol:PairCancellationImpliesMain}
  Formula~\eqref{eqn:Main} is satisfied by $\O$ if and only if it is satisfied by $\O'$.
\end{Cor}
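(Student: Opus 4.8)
The plan is to show that both sides of~\eqref{eqn:Main} transform by the same multiplicative factor under the passage from $\O$ to $\O'$, so that the asserted equality holds for one exactly when it holds for the other.

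First I would record how the generating functions themselves behave. The bijection furnished by Lemma~\ref{lemma:OrnamentPairCancellation} decreases both $\CINDEX$ and $\CORNERS$ by $1$; since $\CINDEX$ is tracked by the exponent of $\l$ and $\CORNERS$ by the exponent of $\m$, summing $\l^{\CINDEX}\m^{\CORNERS}$ over corresponding paths yields at once
\[
  \qtbinom{m+n}{m}^{\O} = \l\,\m \cdot \qtbinom{m+n}{m}^{\O'}.
\]

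Next I would verify that the right-hand side of~\eqref{eqn:Main} satisfies the very same relation. Passing from $\O$ to $\O'$ changes the parameters by $d' = d-1$, $r' = r-1$, and $s' = s-1$ (we remove the horizontal ornament $0$ and the vertical ornament $1$), while the difference $Q = d-r$ is preserved; in particular the binomial ``levels'' $m+r-d$ and $n+d-r$ are unchanged. Writing $R_{\O}$ for the sum $\l^s \sum_c \qbinom{m+r-d}{c-d}\qbinom{n+d-r}{c-r}\l^{(c-d)(c-r)}\m^c$, the expression $R_{\O'}$ then has upper binomial entries $c-d' = c-d+1$ and $c-r' = c-r+1$, prefactor $\l^{s-1}$, and quadratic exponent $\l^{(c-d+1)(c-r+1)}$. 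A single reindexing $c \mapsto c+1$ matches $\l\,\m\cdot R_{\O'}$ term-by-term with $R_{\O}$: the prefactor $\l^{s-1}$ absorbs the $\l$, the shift sends $\m^c$ to $\m^{c+1}$ absorbing the $\m$, and simultaneously carries the entries and exponent to those of $R_{\O}$.

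Finally I would combine the two displays. Since both the actual generating function $\qtbinom{m+n}{m}^{\O}$ and the formal right-hand side $R_{\O}$ acquire exactly the factor $\l\,\m$ when $\O$ is replaced by $\O'$, their difference scales by $\l\,\m$ as well, and therefore vanishes for $\O$ precisely when it vanishes for $\O'$. The only point demanding care is the bookkeeping in the reindexing step — confirming that the shift $c \mapsto c+1$ reconciles the increments in \emph{both} upper binomial entries, the product exponent $(c-d)(c-r)$, and the $\m^c$ factor at once — but this is a routine verification rather than a genuine obstacle.
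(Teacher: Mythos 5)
Your proposal is correct and follows essentially the same route as the paper: use the bijection of Lemma~\ref{lemma:OrnamentPairCancellation} to get $\qtbinom{m+n}{m}^{\O} = \l\m\cdot\qtbinom{m+n}{m}^{\O'}$, then observe that the right-hand side of~\eqref{eqn:Main} scales by the same factor $\l\m$ under $(d,r,s)\mapsto(d-1,r-1,s-1)$ after the reindexing $c\mapsto c-1$. The only cosmetic difference is that you phrase it as ``both sides transform identically,'' whereas the paper substitutes directly and changes the summation variable, but the computation is the same.
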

\begin{proof}
  Let $\pi \in \P{m}{n}^{\O}$ and $\pi' \in \P{m}{n}^{\O'}$ be a bijective pair of paths. Lemma~\ref{lemma:OrnamentPairCancellation} implies that their respective monomials satisfy
  \[
    \l^{\CINDEX(\pi')}\cdot\m^{\CORNERS(\pi')} =
    (\l\m)^{-1} \cdot \l^{\CINDEX(\pi)}\cdot\m^{\CORNERS(\pi)},
  \]
  and consequently (changing summation variable to $c'=c-1$, and noting that removal of one horizontal and one vertical ornament implies $(d',r') = (d-1,r-1)$),
  \begin{multline*}
    \qtbinom{m+n}{m}^{\O'} = (\l\m)^{-1} \cdot \qtbinom{m+n}{m}^{\O} = \\
    \l^{s-1} \cdot \sum_c \left\{
        \qbinom{m+r-d}{c-d} \cdot \qbinom{n+d-r}{c-r} \cdot \l^{(c-d)(c-r)} \cdot \m^{c-1}
      \right\} = \\
    \l^{s'} \cdot \sum_{c'} \left\{
        \qbinom{m+r'-d'}{c'-d'} \cdot \qbinom{n+d'-r'}{c'-r'} \cdot \l^{(c'-d')(c'-r')} \cdot \m^{c'}
      \right\}
  \end{multline*}
\end{proof}

\begin{proof}[Proof of Lemma~\ref{lemma:OrnamentPairCancellation}]
  We will split $\P{m}{n}$ into classes, and define a bijection within each class. The bijection on $\P{m}{n}$ is trivially the union of class-by-class bijections. \\

  Since $H,V \neq \varnothing$, and $0 \in H,\, 1\in V$, every word in $\P{m}{n}$ (before adding ornaments) is of the form {\large $\wa \R\wb\, \wv$}, where
  \begin{itemize}
    \item[--] Any of the words $\wa,\wb,\wv$ can be empty, but there is at least one $\D$ in $\wa\wb\wv$.
    \item[--] The concatenation $\wa\wb$ consists exclusively of $\D$s.
    \item[--] If $\wv$ is not empty, it begins with $\R$.
  \end{itemize}
  Any choice of $\wv$ determines a class $S_{\wv}$ of words in $\P{m}{n}$ which differ {\em only} in the position of the first $\R$, ``floating in a sea'' of $\D$s, so that $S_{\wv}$ consists of $|\wa\wb|+1$ words. If $\wa,\wb$ are both empty, the class $S_{\wv}$ consists of a single word in which the first $\R$ is followed by a second $\R$, and the first $\D$ is preceded by (some) $\R$. In this case the trivial bijection from $S_{\wv}^{\O}$ to $S_{\wv}^{\O'}$ satisfies the conclusion, because removal of the first $\R$-ornament reduces both $\CORNERS$ and $\CINDEX$ counts by 1, while removal of the first $\D$-ornament does not change either statistic.

  Otherwise, $\wa\wb$ contains the first $\D$, so the ornated version of $\wv$ is identical for all words in ${S_{\wv}}^{\O} \cup {S_{\wv}}^{\O'}$, and thus, we only need to track the contributions to $\CORNERS$ and $\CINDEX$ that occur before $\wv$. To simplify the argument, let us ignore WLOG any contribution to $\CORNERS$ or $\CINDEX$ originating in $\wv$.

  Consider four types of words, both in ${S_{\wv}}^{\O}$ and in ${S_{\wv}}^{\O'}$ (keeping in mind that the floating letter is $\Ro$ for $\O$, and $\R$ for $\O'$):
  {
  \renewcommand{\theenumi}{\arabic{enumi}}
  \renewcommand{\labelenumi}{(\theenumi)}
  \begin{enumerate}
    \item\label{item:type1} $\wa,\wb \neq \varnothing$; the floating letter faces a $\oD$.
    \item\label{item:type2} $\wa,\wb \neq \varnothing$; the floating letter faces a $\D$.
    \item\label{item:type3} $\wa = \varnothing$; i.e., the floating letter is leftmost.
    \item\label{item:type4} $\wb = \varnothing$; i.e., the floating letter faces $\wv$.
  \end{enumerate} }

  Since the floating letter in types~\ref{item:type1}, \ref{item:type2} forms a true corner regardless of ornaments, it makes no difference to $\CORNERS$ and $\CINDEX$ whether it is $\Ro$ or $\R$. Moreover, the only other change from $\O$ to $\O'$ is the removal of the ornament in the first $\D$ of $\wa$, whose c-index is $0$. Thus,

  \begin{itemize}
    \item Each word $\ww \in {S_{\wv}}^{\O}$ of type~\ref{item:type1}~or~\ref{item:type2}, has the same c-index as its counterpart $\ww' \in {S_{\wv}}^{\O'}$.
    \item All words of type~\ref{item:type1} have exactly $\big( \text{number of }\oD\text{s} \big)$ corners. This statistic is larger by 1 in ${S_{\wv}}^{\O}$ than in ${S_{\wv}}^{\O'}$.
    \item All words of type~\ref{item:type2} have  exactly $\big( \text{number of }\oD\text{s}+1 \big)$ corners. As above, this statistic is larger by 1 in ${S_{\wv}}^{\O}$.
  \end{itemize}

  Now let us impose linear orders on words of type~\ref{item:type1} and type~\ref{item:type2}, in each of ${S_{\wv}}^{\O}$ and ${S_{\wv}}^{\O'}$ independently. We declare a word of type~\ref{item:type1} to be lower in the order if its floating letter is located more to the right. Visually, in an ordered list of words of type~\ref{item:type1}, the floating letter ``floats'' from right to left. Thus,

  \begin{itemize}
    \item At each step in either ordered list, $\CINDEX$ increases by $1$ (because one $\oD$ shifts one space to the right).
  \end{itemize}

  In a similar fashion, we order words of type~\ref{item:type2} so that the floating letter floats from left to right. We find,
  \begin{itemize}
    \item At each step in either ordered list, $\CINDEX$ increases by $1$ (because a $\D\overbrace{\oD\ldots\oD}^k$ block shifts to the left, increasing $\CINDEX$ by $(k+1)-k$).
  \end{itemize}

  Next we string these orders together (again, in each of ${S_{\wv}}^{\O}$ and ${S_{\wv}}^{\O'}$ independently), so that type~\ref{item:type1} precedes the single word of type~\ref{item:type3}, which precedes type~\ref{item:type2}.

  \begin{itemize}
    \item In $S_{\wv}^{\O}$, the word $\Ro\, \wa\wb\, \wv$ of type~\ref{item:type3} continues the floating pattern in type~\ref{item:type1}, so $\CORNERS = \#\oD$, and $\CINDEX$ {\em ends} the increase-by-1 rule from type~\ref{item:type1}.
    \item In $S_{\wv}^{\O}$, the word $\R\, \wa\wb\, \wv$ of type~\ref{item:type3} initiates the floating pattern in type~\ref{item:type2}, so $\CORNERS = \#\oD+1$, and $\CINDEX$ {\em starts} the increase-by-1 rule in type~\ref{item:type2}.
  \end{itemize}

  Finally, add the single word of type~\ref{item:type4} to the ordered lists; but whereas we place $\wa\wb\,\Ro\,\wv$ last in the ${S_{\wv}}^{\O}$ order, $\wa\wb\,\R\,\wv$ will be listed first in the ${S_{\wv}}^{\O'}$ order. Then

  \begin{itemize}
    \item The word $\wa\wb\,\Ro\,\wv \in S_{\wv}^{\O}$ has the same number of corners as type~\ref{item:type2}, while $\wa\wb\,\R\,\wv \in S_{\wv}^{\O'}$ has the same number of corners as type~\ref{item:type1}.
    \item The word $\wa\wb\,\Ro\,\wv$ {\em ends} the $\CINDEX$ increase-by-1 rule of type~\ref{item:type2}, while $\wa\wb\,\R\,\wv$ {\em starts} the increase-by-1 rule of type~\ref{item:type1} (and for the same reasons as above).
  \end{itemize}

  \begin{table}[h]\begin{center}
  \begin{tabular}{|c@{\qquad$\longleftrightarrow$\qquad}c|}
    \hline
    $\oD\, \boxed{\Ro}\, \oD\, \D \,\,\, \wv$ &
    $\D\, \oD\, \D\, \boxed{\R} \,\,\, \wv$ \\
    $\boxed{\Ro}\, \oD\, \oD\, \D \,\,\, \wv$ &
    $\D\, \boxed{\R}\, \oD\, \D \,\,\, \wv$ \\
    $\oD\, \oD\, \boxed{\Ro}\, \D \,\,\, \wv$ &
    $\boxed{\R}\, \D\, \oD\, \D \,\,\, \wv$ \\
    $\oD\, \oD\, \D\, \boxed{\Ro} \,\,\, \wv$ &
    $\D\, \oD\, \boxed{\R}\, \D \,\,\, \wv$ \\
    \hline
  \end{tabular}\end{center}
  \caption{A floating $\protect\Ro$ within $\protect\oD\protect\oD\D$ turns into a floating $\R$ floating within $\D\protect\oD\D$\,.}
   \label{table:Floating-1}
  \end{table}
  
  We have defined linear orders in both ${S_{\wv}}^{\O}$ and ${S_{\wv}}^{\O'}$, the only difference among which is the position of the type~\ref{item:type4} word. Let us list the words in ${S_{\wv}}^{\O}$ as $\{ \ww_0,\ldots\,\ww_{|ab|} \}$, and those in ${S_{\wv}}^{\O'}$ as $\{ \ww'_0,\ldots\,\ww'_{|ab|} \}$, with indices following the respective orders. Together, the bulleted facts above imply that in {\em both lists}, $\CORNERS$ is piecewise constant, jumping up by 1 at the type~\ref{item:type3} word, while $\CINDEX$ increases by 1 at every step except at type~\ref{item:type3}, where the increase is common to both lists. It follows that for every bijective pair $\ww_i \leftrightarrow \ww_i'$, the offset in either statistic is the same regardless of the index $i$.

  To finish the proof, notice that the last two bullets imply $\CORNERS(\ww_i') = \CORNERS(\ww_i)-1$, and $\CINDEX(\ww_i') = \CINDEX(\ww_i)-1$ for both $i=0$ and $|\wa\wb|$, so the offset is $-1$ for both $\CORNERS$ and $\CINDEX$, as claimed.
\end{proof}

\bigskip
We construct now a bijection between two scramblers on $\G{m}{n}$, that differ only by shifting {\em one vertical} ornament by one step. As in the preceding lemma, the differences $d-r$ and $d'-r'$ are equal, while the ornament sums satisfy $s'=s-1$.

\setcounter{tmp}{\value{Thm}-1}
\begin{customlemma}{{\thetmp}{\sc v}}
 \label{lemma:OrnamentShift-V}
  Consider the scramblers $\O=(H,V)$ and $\O'=(H,V')$ on $\G{m}{n}$, where $V=\{ v_1, \ldots, v_{j-1}, \Hat{v_j\parbox{0.2in}{\footnotesize $-1$}}, v_j, v_{j+1}, \ldots, v_r \}$, and $V' = \{v_1, \ldots, v_j-1, \Hat{\,v_j\,}, v_{j+1}, \ldots, v_r \}$. Then, there is a bijection from $\P{m}{n}^{\O}$ to $\P{m}{n}^{\O'}$ that preserves $\CORNERS$, and decreases $\CINDEX$ by $1$.
\end{customlemma}

\begin{Cor}
 \label{corol:VerticalShiftImpliesMain}
  Formula~\eqref{eqn:Main} is satisfied by $\O$ if and only if it is satisfied by $\O'$.
\end{Cor}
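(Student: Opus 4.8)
The plan is to follow the template of the proof of Corollary~\ref{corol:PairCancellationImpliesMain}, adapting it to the milder change effected by Lemma~\ref{lemma:OrnamentShift-V}. First I would translate the conclusion of the lemma into generating-function language. For a bijective pair $\pi \in \P{m}{n}^{\O}$ and $\pi' \in \P{m}{n}^{\O'}$, the fact that the bijection preserves $\CORNERS$ while lowering $\CINDEX$ by one yields the monomial identity
\[
  \l^{\CINDEX(\pi')} \cdot \m^{\CORNERS(\pi')} =
  \l^{-1} \cdot \l^{\CINDEX(\pi)} \cdot \m^{\CORNERS(\pi)}.
\]
Summing over all paths, and using that the map is a bijection, I obtain $\qtbinom{m+n}{m}^{\O'} = \l^{-1} \cdot \qtbinom{m+n}{m}^{\O}$.

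The second step is the simplification that makes this corollary lighter than its predecessor: shifting a single vertical ornament from $v_j$ to $v_j-1$ alters neither $d = |H|$ nor $r = |V|$. Hence $(d',r') = (d,r)$, the difference $d-r$ is preserved, and every binomial factor together with every exponent $(c-d)(c-r)$ and $c$ in Formula~\eqref{eqn:Main} is literally the same for $\O$ and $\O'$. The only quantity that changes is the ornament sum, which decreases by one: $s' = s-1$.

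Combining the two steps, if $\O$ satisfies \eqref{eqn:Main} then
\[
  \qtbinom{m+n}{m}^{\O'} =
  \l^{-1} \cdot \qtbinom{m+n}{m}^{\O} =
  \l^{s-1} \cdot \sum_c \left\{
    \qbinom{m+r-d}{c-d} \cdot \qbinom{n+d-r}{c-r} \cdot \l^{(c-d)(c-r)} \cdot \m^{c}
  \right\},
\]
and since $(d',r',s') = (d,r,s-1)$ this is precisely \eqref{eqn:Main} written for $\O'$. The reverse implication follows by multiplying through by $\l$, giving the claimed equivalence.

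There is no genuine obstacle in this argument; the only point that must be checked with care is that moving an ornament indeed leaves the cardinalities $d$ and $r$ untouched—so that the convolution sum is unchanged and merely the prefactor $\l^s$ slides to $\l^{s-1}$—which is immediate from the description of $V$ and $V'$ in Lemma~\ref{lemma:OrnamentShift-V}. Note that an entirely parallel argument, appealing to Lemma~\ref{lemma:OrnamentShift-H} in place of Lemma~\ref{lemma:OrnamentShift-V}, handles the horizontal case.
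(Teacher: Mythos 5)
Your argument is correct and follows essentially the same route as the paper's own proof: both pass from the bijection of Lemma~\ref{lemma:OrnamentShift-V} to the generating-function identity $\qtbinom{m+n}{m}^{\O'} = \l^{-1}\cdot\qtbinom{m+n}{m}^{\O}$, and then observe that $(d',r')=(d,r)$ and $s'=s-1$ so that only the prefactor $\l^s$ changes in Formula~\eqref{eqn:Main}. Nothing is missing.
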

\begin{proof}
  Let $\pi \in \P{m}{n}^{\O}$ and $\pi' \in \P{m}{n}^{\O'}$ be a bijective pair of paths. Lemma~\ref{lemma:OrnamentShift-V} implies that their respective monomials satisfy
  \[
    \l^{\CINDEX(\pi')}\cdot\m^{\CORNERS(\pi')} =
    (\l)^{-1} \cdot \l^{\CINDEX(\pi)}\cdot\m^{\CORNERS(\pi)}.
  \]
  Since shifting a vertical ornament one position to the left implies $s'=s-1$ (while $(d',r')=(d,r)$),
  \[
    \qtbinom{m+n}{m}^{\O'} = (\l)^{-1} \cdot \qtbinom{m+n}{m}^{\O} = 
    \l^{s'} \cdot \sum_{c} \left\{
        \qbinom{m+r'-d'}{c-d'} \cdot \qbinom{n+d'-r'}{c-r'} \cdot \l^{(c-d')(c-r')} \cdot \m^{c}
      \right\}
  \]
\end{proof}

\begin{proof}[Proof of Lemma~\ref{lemma:OrnamentShift-V}]
  The argument follows closely the structure of the proof of Lemma~\ref{lemma:OrnamentPairCancellation}, but with a few differences. By assumption, there are at least two $\R$s, and $H,V \neq \varnothing$, so every word in $\P{m}{n}$ is of the form {\large $\wu\R\,\wa\R\wb\,\wv$}, where
  \begin{itemize}
    \item[--] The $\R$s correspond to the $(v_j-1)$-st and $v_j$-th horizontal steps.
    \item[--] If $\wa$ or $\wb$ are not empty, they contain only $\D$s.
    \item[--] If $\wv$ is not empty, it begins with $\R$.
  \end{itemize}
  Every choice of $\wu,\wv$ determines a class $_{\wu}S_{\wv} \in \P{m}{n}$ of $|\wa\wb|+1$ words. The ornated versions of $\wu,\wv$ are identical for all words in $_{\wu}S_{\wv}^{\O} \cup _{\wu}S_{\wv}^{\O'}$, so we will ignore WLOG any contribution to $\CORNERS$ and $\CINDEX$ originating in $\wu,\wv$. As before, we consider types~\ref{item:type1} through~\ref{item:type4}, noting that this time, type~\ref{item:type3} has the floating letter following $\wu$. Also, in types~\ref{item:type1},~\ref{item:type2}, both $\R$s face a $\D$ or $\oD$, so we can define the same floating orders as before. All in all,
  \begin{itemize}
    \item Each word $\ww \in {_{\wu}S_{\wv}}^{\O}$ of type~\ref{item:type1}~or~\ref{item:type2}, has the same $\CINDEX$ as its counterpart $\ww' \in {_{\wu}S_{\wv}}^{\O'}$.
    \item All words of type~\ref{item:type1} have the same $\CORNERS$ count $c$ in both ${_{\wu}S_{\wv}}^{\O}$ and ${_{\wu}S_{\wv}}^{\O'}$ (here, $c = \# \oD$ if $\wa$ begins with $\oD$, and $c=\#\oD+1$ if $\wa$ begins with $\D$).
    \item All words of type~\ref{item:type2} have the same $\CORNERS$ count $c+1$ in both ${_{\wu}S_{\wv}}^{\O}$ and ${_{\wu}S_{\wv}}^{\O'}$ (because the floating letter creates one more corner by facing an unornated $\D$).
    \item At each step in either list, $\CINDEX$ increases by $1$ (because one $\oD$ shifts one space to the right).
    \vspace{-12pt}
    \item At each step in either list, $\CINDEX$ increases by $1$ (because a $\D\overbrace{\oD\ldots\oD}^k$ block shifts to the left, increasing $\CINDEX$ by $(k+1)-k$).
  \end{itemize}

  String the orders together so that type~\ref{item:type1} precedes type~\ref{item:type3}, which precedes type~\ref{item:type2}. Here we must split into two cases:
  \begin{itemize}
    \item \underline{If $\wa$ begins with $\oD$}, the word $\wu\,\R\Ro\, \wa\wb\, \wv$ of type~\ref{item:type3} in ${_{\wu}S_{\wv}}^{\O}$ ends the floating pattern in type~\ref{item:type1}, so $\CORNERS = \big( \#\oD \big)$, and $\CINDEX$ follows the increase-by-1 rule from type~\ref{item:type1}. Compared to this, the word $\wu\,\Ro\R\, \wa\wb\, \wv$ in ${_{\wu}S_{\wv}}^{\O'}$ has one more corner, so $\CORNERS = \big( \#\oD \big)+1$, just as the following type~\ref{item:type2}. Moreover, this word is the seed to start the $\CINDEX$ increase-by-1 rule of type~\ref{item:type2}.
    \item \underline{If $\wa$ begins with $\D$}, the word $\wu\,\Ro\R\, \wa\wb\, \wv$ of type~\ref{item:type3} in ${_{\wu}S_{\wv}}^{\O'}$ starts the floating pattern in type~\ref{item:type2}, so $\CORNERS = \big( \#\oD \big) +2$, and $\CINDEX$ begins the increase-by-1 rule in type~\ref{item:type2}. Compared to this, the word $\wu\,\R\Ro\, \wa\wb\, \wv$ in ${_{\wu}S_{\wv}}^{\O}$ has one corner less, so $\CORNERS = \big( \#\oD \big)+1$, just as the preceding type~\ref{item:type1}. Moreover, this word terminates the $\CINDEX$ increase-by-1 rule of type~\ref{item:type1}.
  \end{itemize}

  Finally, add the words of type~\ref{item:type4} so that $\wu\,\R \wa\wb\,\Ro\,\wv$ is last in the ${_{\wu}S_{\wv}}^{\O}$ order, and $\wu\,\Ro \wa\wb\,\R\,v$ is first in the ${_{\wu}S_{\wv}}^{\O'}$ order. Then,

  \begin{itemize}
    \item The word $\wu\R\wa\wb\,\Ro\,\wv \in S_{\wv}^{\O}$ has the same number of corners as type~\ref{item:type2}, while $\wa\wb\,\R\,\wv \in S_{\wv}^{\O'}$ has the same number of corners as type~\ref{item:type1}.
    \item The word $\wa\wb\,\Ro\,\wv$ ends the $\CINDEX$ increase-by-1 rule of type~\ref{item:type2}, while $\wa\wb\,\R\,\wv$ starts the increase-by-1 rule of type~\ref{item:type1}.
  \end{itemize}
  
  \begin{table}\begin{center}\begin{tabular}{|c@{\qquad$\longleftrightarrow$\qquad}c|}
  \hline
  $u\,\,\, \R\,\,\, \oD\, \D\, \D\, \oD\, \boxed{\Ro}\, \oD\, \D\,\,\, v$  &
  $u\,\,\, \Ro\,\,\, \oD\, \D\, \D\, \oD\, \oD\, \D\, \boxed{\R}\,\,\, v$ \\
  $u\,\,\, \R\,\,\, \oD\, \D\, \D\, \boxed{\Ro}\, \oD\, \oD\, \D\,\,\, v$  &
  $u\,\,\, \Ro\,\,\, \oD\, \D\, \D\, \oD\,\boxed{\R}\,  \oD\, \D\,\,\, v$ \\
  $u\,\,\, \R\,\,\,\boxed{\Ro}\,  \oD\, \D\, \D\, \oD\, \oD\, \D\,\,\, v$  &
  $u\,\,\, \Ro\,\,\, \oD\, \D\, \D\,\boxed{\R}\,  \oD\, \oD\, \D\,\,\, v$ \\
  $u\,\,\, \R\,\,\, \oD\,\boxed{\Ro}\,  \D\, \D\, \oD\, \oD\, \D\,\,\, v$  &
  $u\,\,\, \Ro\,\,\,\boxed{\R}\,  \oD\, \D\, \D\, \oD\, \oD\, \D\,\,\, v$ \\
  $u\,\,\, \R\,\,\, \oD\, \D\,\boxed{\Ro}\,  \D\, \oD\, \oD\, \D\,\,\, v$  &
  $u\,\,\, \Ro\,\,\, \oD\,\boxed{\R}\, \D\, \D\, \oD\, \oD\, \D\,\,\, v$ \\
  $u\,\,\, \R\,\,\, \oD\, \D\, \D\, \oD\, \oD\,\boxed{\Ro}\,  \D\,\,\, v$  &
  $u\,\,\, \Ro\,\,\, \oD\, \D\,\boxed{\R}\,  \D\, \oD\, \oD\, \D\,\,\, v$ \\
  $u\,\,\, \R\,\,\, \oD\, \D\, \D\, \oD\, \oD\, \D\, \boxed{\Ro}\,\,\, v$  &
  $u\,\,\, \Ro\,\,\, \oD\, \D\, \D\, \oD\, \oD\,\boxed{\R}\,  \D\,\,\, v$ \\
  \hline
  \hline
  \hline
  $u\,\,\, \R\,\,\, \D\, \oD\, \oD\, \D\, \D\,\boxed{\Ro}\,  \oD\,\,\, v$  &
  $u\,\,\, \Ro\,\,\, \D\, \oD\, \oD\, \D\, \D\, \oD\, \boxed{\R}\,\,\, v$ \\
  $u\,\,\, \R\,\,\, \D\, \oD\,\boxed{\Ro}\,  \oD\, \D\, \D\, \oD\,\,\, v$  &
  $u\,\,\, \Ro\,\,\, \D\, \oD\, \oD\, \D\, \D\,\boxed{\R}\,  \oD\,\,\, v$ \\
  $u\,\,\, \R\,\,\, \D\,\boxed{\Ro}\,  \oD\, \oD\, \D\, \D\, \oD\,\,\, v$  &
  $u\,\,\, \Ro\,\,\, \D\, \oD\,\boxed{\R}\,  \oD\, \D\, \D\, \oD\,\,\, v$ \\
  $u\,\,\, \R\,\,\,\boxed{\Ro}\,  \D\, \oD\, \oD\, \D\, \D\, \oD\,\,\, v$  &
  $u\,\,\, \Ro\,\,\, \D\,\boxed{\R}\,  \oD\, \oD\, \D\, \D\, \oD\,\,\, v$ \\
  $u\,\,\, \R\,\,\, \D\, \oD\, \oD\, \boxed{\Ro}\, \D\, \D\, \oD\,\,\, v$  &
  $u\,\,\, \Ro\,\,\,\boxed{\R}\,  \D\, \oD\, \oD\, \D\, \D\, \oD\,\,\, v$ \\
  $u\,\,\, \R\,\,\, \D\, \oD\, \oD\, \D\, \boxed{\Ro}\, \D\, \oD\,\,\, v$  &
  $u\,\,\, \Ro\,\,\, \D\, \oD\, \oD\,\boxed{\R}\,  \D\, \D\, \oD\,\,\, v$ \\
  $u\,\,\, \R\,\,\, \D\, \oD\, \oD\, \D\, \D\, \oD\, \boxed{\Ro}\,\,\, v$  &
  $u\,\,\, \Ro\,\,\, \D\, \oD\, \oD\, \D\,\boxed{\R}\,  \D\, \oD\,\,\, v$ \\
  \hline
\end{tabular}\end{center}
\caption{The floating $\protect\Ro$ trades places with the left $\R$. In the example above, the ``sea'' of $\D$s begins with $\protect\oD$. In the example below, it begins with $\D$.}
 \label{}
\end{table}

  List the words in ${S_{\wv}}^{\O}$ as $\{ \ww_0,\ldots\,\ww_{|ab|} \}$, and those in ${S_{\wv}}^{\O'}$ as $\{ \ww'_0,\ldots\,\ww'_{|ab|} \}$, with indices following the respective orders. Together, the bulleted facts above imply that in both lists, $\CORNERS$ is piecewise constant, jumping up by 1 at the type~\ref{item:type3} word, while $\CINDEX$ increases by 1 at every step except at type~\ref{item:type3}, where the increase is common to both lists. It follows that for every bijective pair $\ww_i \leftrightarrow \ww_i'$, the offset in either statistic is the same regardless of the index $i$.

  To finish the proof, notice that the last two bullets imply $\CORNERS(\ww_i') = \CORNERS(\ww_i)$, and $\CINDEX(\ww_i') = \CINDEX(\ww_i)-1$ for both $i=0$ and $|\wa\wb|$, so the offset is $0$ for $\CORNERS$, and $-1$ for $\CINDEX$ as claimed.
\end{proof}

The exact same argument applies (with obvious modifications) for scramblers that differ in one {\em horizontal} ornament:

\begin{customlemma}{{\thetmp}{\sc h}}
 \label{lemma:OrnamentShift-H}
  Consider the scramblers $\O=(H,V)$ and $\O'=(H',V)$ on $\G{m}{n}$, where $H=\{ h_1, \ldots, h_{j-1}, \Hat{h_j\parbox{0.2in}{\footnotesize $-1$}}, h_j, h_{j+1}, \ldots, h_d \}$, and $H' = \{h_1, \ldots, h_j-1, \Hat{\,h_j\,}, h_{j+1}, \ldots, h_d \}$. Then there is a bijection from $\P{m}{n}^{\O}$ to $\P{m}{n}^{\O'}$ that preserves $\CORNERS$, and decreases $\CINDEX$ by $1$.
\end{customlemma}
\begin{Cor}
 \label{corol:HorizontalShiftImpliesMain}
  Formula~\eqref{eqn:Main} is satisfied by $\O$ if and only if it is satisfied by $\O'$.
\end{Cor}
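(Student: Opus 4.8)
The plan is to reproduce, essentially verbatim, the template already used for Corollaries~\ref{corol:PairCancellationImpliesMain} and~\ref{corol:VerticalShiftImpliesMain}, now feeding in Lemma~\ref{lemma:OrnamentShift-H} in place of its vertical twin. First I would fix a bijective pair $\pi \in \P{m}{n}^{\O}$ and $\pi' \in \P{m}{n}^{\O'}$ as supplied by Lemma~\ref{lemma:OrnamentShift-H}. Since that bijection preserves $\CORNERS$ and decreases $\CINDEX$ by exactly $1$, the two monomials attached to the pair satisfy
\[
  \l^{\CINDEX(\pi')} \cdot \m^{\CORNERS(\pi')}
  = \l^{-1} \cdot \l^{\CINDEX(\pi)} \cdot \m^{\CORNERS(\pi)}.
\]
Summing this identity over all paths (the bijection guarantees both sums range over matching index sets) collapses to the single generating-function relation $\qtbinom{m+n}{m}^{\O'} = \l^{-1} \cdot \qtbinom{m+n}{m}^{\O}$.

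Next I would translate this relation into the claimed equivalence by substituting Formula~\eqref{eqn:Main} for $\O$. The one piece of bookkeeping that distinguishes the horizontal shift from the pair-cancellation case is that moving a single horizontal ornament one line up leaves the ornament \emph{counts} untouched, so $(d',r') = (d,r)$, while the ornament sum drops by one, $s' = s-1$. Consequently the summation index $c$ needs no reindexing (in contrast to Corollary~\ref{corol:PairCancellationImpliesMain}, where one is forced to set $c' = c-1$): the prefactor $\l^s$ is simply carried to $\l^{s-1} = \l^{s'}$, while every factor $\qbinom{m+r-d}{c-d}$, $\qbinom{n+d-r}{c-r}$ and power $\l^{(c-d)(c-r)}\m^c$ is left unchanged. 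The right-hand side is then literally Formula~\eqref{eqn:Main} written for $\O'$.

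Finally, because the derived relation is equivalent to $\l \cdot \qtbinom{m+n}{m}^{\O'} = \qtbinom{m+n}{m}^{\O}$, the chain of substitutions reads equally well in reverse, and so Formula~\eqref{eqn:Main} holds for $\O$ precisely when it holds for $\O'$ — the asserted biconditional. I expect no genuine obstacle here, since the entire combinatorial content is packed into Lemma~\ref{lemma:OrnamentShift-H}; the only point requiring care is the parameter accounting just noted, namely keeping $(d,r)$ fixed and tracking the single unit drop in $s$, so that the two instances of~\eqref{eqn:Main} align monomial by monomial.
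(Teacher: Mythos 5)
Your proposal is correct and is essentially the paper's own argument: the paper leaves this corollary without an explicit proof precisely because it is the verbatim adaptation of the proof of Corollary~\ref{corol:VerticalShiftImpliesMain}, which is exactly what you carry out, with the right bookkeeping ($(d',r')=(d,r)$, $s'=s-1$, no reindexing of $c$). Nothing is missing.
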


\subsection{The reduced ornament lemma}
The systematic application of lemmas~\ref{lemma:OrnamentPairCancellation}, and \ref{lemma:OrnamentShift-H},\ref{lemma:OrnamentShift-V} allow us to depopulate scramblers one ornament-pair at a time. Eventually, we are left with a scrambler with only one type of ornaments (either all horizontal, or all vertical), which we can further assume are shifted to their lowest positions. We will prove formula~\eqref{eqn:Main} in this situation:

\begin{Prop}
 \label{prop:ReducedVandermonde}
  Consider the scrambler $\O$ on $G_{m,n}$ with $H = \varnothing$, and $V = \{1,\ldots,r\}$. Then $d=0$, $s = \tbinom{n}{2}$, and the generating function of $(\CINDEX,\CORNERS)$ over all paths in $\P{m}{n}^{\O}$ is
  \begin{equation}
   \label{eqn:ReducedVandermonde}
    \l^{\tbinom{r}{2}} \cdot
    \sum_c \left\{ \qbinom{m+r}{c}\cdot\qbinom{n-r}{c-r} \cdot \l^{c(c-r)}\cdot\m^c \right\}.
  \end{equation}
\end{Prop}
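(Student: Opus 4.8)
The plan is to extend the proof sketch of Fact~\ref{fact:AREAandCINDEXequidistributed} by encoding each path in $\P{m}{n}^{\O}$ through the data of its scrambled corners. Since the vertical lasers sit on the lowest lines $1,\dots,r$, every path is forced to carry a (true or virtual) corner on each of the columns $1,\dots,r$; on a column $j>r$ a corner appears exactly when the $j$-th $\R$-step is immediately followed by a $\D$. Hence a path with $\CORNERS=c$ has its corners on columns $1,\dots,r$ together with an $e$-element subset of $\{r+1,\dots,n\}$, where $e=c-r$ (so $c\ge r$ always, matching the factor $\qbinom{n-r}{c-r}$). First I would check that this assignment is a bijection onto admissible ``row/column'' data, the inverse being the obvious reconstruction of the path from the locations of its corners.

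Second, I would record how $\CINDEX$ decomposes. If $\rho_1\le\cdots\le\rho_c$ are the rows of the successive corners and $\gamma_1<\cdots<\gamma_c$ their columns, then the c-index of the $k$-th corner is $\rho_k+\gamma_k$, so $\CINDEX=\sum_k\rho_k+\sum_k\gamma_k$. The forced columns contribute $\gamma_1+\cdots+\gamma_r=1+\cdots+r=\binom{r+1}{2}$, which is precisely the ornament sum $s$; factoring out $\l^{s}$ leaves $\sum_k\rho_k+\sum_{k>r}\gamma_k$, namely the sum of \emph{all} corner rows plus the sum of the \emph{true}-corner columns. These two pieces involve disjoint data subject to independent constraints, so for each fixed $c$ the generating function is a product of a ``row factor'' and a ``column factor''.

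Third, I would evaluate the two factors by standard $\l$-binomial bookkeeping. The true-corner columns form an $e$-subset of $\{r+1,\dots,n\}$; shifting down by $r$ gives the column factor $\l^{re+\binom{e+1}{2}}\qbinom{n-r}{e}$. For the rows the key observation is that $\rho_1\le\cdots\le\rho_r$ are only \emph{weakly} increasing (a virtual corner need not be a genuine turn, and the junction $\rho_r\le\rho_{r+1}$ is likewise weak), whereas the $e$ true-corner rows $\rho_{r+1}<\cdots<\rho_c\le m-1$ are \emph{strictly} increasing. Subtracting $0,1,\dots,e-1$ from the last $e$ rows (at a cost of $\l^{\binom{e}{2}}$) turns the whole row vector into a weakly increasing sequence of length $c$ with entries in $\{0,\dots,m-e\}$, whose generating function is the partition count in a $c\times(m-e)$ box, i.e. $\qbinom{m+r}{c}$ since $(m-e)+c=m+r$. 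Collecting exponents via $\binom{e}{2}+\binom{e+1}{2}+re=e^2+re=c(c-r)$ gives the summand $\qbinom{m+r}{c}\qbinom{n-r}{c-r}\l^{c(c-r)}$; weighting by $\m^c$, summing over $c$, and restoring the prefactor $\l^{s}$ yields the expression in \eqref{eqn:ReducedVandermonde}.

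The hard part is the bijection bookkeeping, not the algebra. I expect the main obstacle to be pinning down the admissible row region exactly: establishing that the corner data reconstructs the path uniquely, and that the correct constraints are weak monotonicity among the first $r$ rows, strict monotonicity among the last $e$, a weak junction between them, and top entry $m-1$ — together with the boundary case $e=0$, in which the forced rows may reach $m$, the column factor degenerates to $1$, and the row factor is just $\qbinom{m+r}{r}$. Once the admissible region is fixed, the factorization and the two $\l$-binomial evaluations are routine. I note that this computation delivers the shift $\l^{s}=\l^{\binom{r+1}{2}}$, in agreement with the prefactor $\l^{s}$ of Theorem~\ref{thm:Main}.
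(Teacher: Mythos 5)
Your argument is correct, but it takes a genuinely different route from the paper's. The paper reconstructs each word of $\P{m}{n}^{\O}$ by starting from the fixed string $\Ro\cdots\Ro\,\R\cdots\R$ and inserting the $m$ $\D$s in two independent stages (first $\ell=c-r$ isolated $\D$s among the plain $\R$s to create the true corners, then the remaining $\D$s so as to create no new corners); each stage is put in bijection with a smaller unornated path set, the factors $\qbinom{n-r}{c-r}$ and $\qbinom{m+r}{c}$ are read off via the swap arguments of \S\ref{sect:Statistics}, and the overall power of $\l$ is obtained only at the end, by evaluating $\CINDEX$ on the minimal word. You instead parameterize a path directly by the rows and columns of its $c$ marked corners, extending the proof sketch of Fact~\ref{fact:AREAandCINDEXequidistributed}, and you compute the two $q$-binomial factors together with their exact exponent shifts by standard subset and partition-in-a-box counts. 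The trade-off sits exactly where you flag it: the paper's insertion scheme never has to describe the admissible region of corner coordinates, whereas your scheme must establish the mixed weak/strict monotonicity constraints on the rows (weak among the $r$ forced corners, strict among the $e=c-r$ true ones, a weak junction, top row at most $m-1$ when $e\ge 1$, and the degenerate case $e=0$). Those constraints are correct as you state them, and the reconstruction is indeed unique, because any descent of the path strictly between two consecutive marked columns, or after the last true corner, would create an unmarked true corner; hence all $\D$s between consecutive corners are forced to sit immediately after the earlier corner's column, and the corner data determines the word. In return, your computation is more explicit than the paper's (which leaves the intermediate shifts as ``$\l^x$, $\l^y$ for some $x,y$'' until the final step), and your exponent $c(c-r)+\binom{r+1}{2}$ agrees both with the value $z$ computed at the end of the paper's proof and with the prefactor $\l^s$ of Theorem~\ref{thm:Main}; note that the $\l^{\binom{r}{2}}$ displayed in~\eqref{eqn:ReducedVandermonde} and the value $s=\binom{n}{2}$ in the statement appear to be typographical slips for $\l^{\binom{r+1}{2}}$ and $s=\binom{r+1}{2}$, so your answer matches the intended one.
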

Thus, Formula~\eqref{eqn:Main} holds for this particular scrambler (the case $V=\varnothing$, $H=\{0,\ldots, d-1\}$ is identical); and in light of corollaries \ref{corol:VerticalShiftImpliesMain}, \ref{corol:HorizontalShiftImpliesMain}, this will conclude the proof of Theorem~\ref{thm:Main}.

\begin{Rem}
 \label{rmk:ProveByNumberOfCorners}
  The bijections from lemmas \ref{lemma:OrnamentPairCancellation}, and \ref{lemma:OrnamentShift-H},\ref{lemma:OrnamentShift-V}, are in fact richer than explicitly stated, because they preserve {\em classes of paths with  fixed} $\CORNERS$ count. We will make use of this to establish the correct $q$-binomial count in~\eqref{eqn:ReducedVandermonde} for {\em each} individual class.
\end{Rem}

\begin{Rem}
 \label{rmk:StayWith-d=0}
  A simplifying property in the proof of Proposition~\ref{prop:ReducedVandermonde} is the trivial fact that a path with $c \geq d+r$ must have at least $\ell := c-(d+r) \geq 0$ {\em true} corners. We are assuming $d=0$, so that {\em every} path in $\P{m}{n}^{\O}$ satisfies $\ell \geq 0$, but it is easy to extend the argument below to scramblers of mixed type, as long as a given class satisfies $c \geq d+r$. Simply add $d$ ersatz $\R$s, for a total of $n+d-r$ $\R$s. When distributing the $\ell$ plain $\D$s to reconstruct the paths, we get $\ell$ $\R\D$-strings as in the proof, but now $d$ of these can be replaced by $\oD$s, eliminating the fake $\R$s, and realizing the correct number of $\oD$s. It is possible to prove~\eqref{eqn:Main}, even without assuming $c \geq d+r$, but the proof becomes more complicated than necessary.
\end{Rem}

\begin{proof}[Proof of Proposition~\ref{prop:ReducedVandermonde}]
  As suggested in remark~\ref{rmk:ProveByNumberOfCorners}, we will show that the generating function of $\CINDEX$, restricted to paths with exactly $c$ corners is
  \[\qbinom{m+r}{c}\cdot\qbinom{n-r}{c-r} \cdot \l^{c(c-r) + \tbinom{r}{2}}.\]
Then, Formula~\eqref{eqn:ReducedVandermonde} follows in a straightforward manner, inserting the factor $\m^c$ to keep track of the $\CORNERS$ count, and adding over all values of $c$. \\

Let $\ell = c-r \geq 0$ as in remark~\ref{rmk:StayWith-d=0}. The value $\ell$ can be interpreted as the number of unornated corners; here, this value remains unchanged for every path under consideration, because $d=0$. In terms of words, every path has $r$ $'\Ro'$ strings, and $\ell$ $'\R\D'$ strings and, because the ornaments are shifted to the left, every $\Ro$ appears {\em before} every $\R\D$; in other words, the $r$ ornated corners (true and/or virtual) appear {\em before} the $\ell$ unornated corners (all of which are true corners).

In fact, the $\Ro$s are located to the left of all plain $\R$s, so a word in $\P{m}{n}^{\O}$ consists of the string
\begin{equation}
 \label{eqn:Rs}
  \underbrace{\Ro \ldots \Ro}_{r} \underbrace{\R \ldots \R}_{n-r},
\end{equation}
with $m$ $\D$s interleaved. We propose to reconstruct the words in $\P{m}{n}^{\O}$ in two stages:
\renewcommand{\labelenumi}{\alph{enumi})}
\begin{enumerate}
  \item Start with the string~\eqref{eqn:Rs}, and insert $\ell$ $\D$s among the $n-r$ plain $\R$s, so that no two $\D$s come together; this creates $\ell$ true corners, for the correct total of $c$. The number of such combinations is $\binom{n-r}{\ell} = \binom{n-r}{c-r}$.
  \item Insert the remaining $m-\ell$ $\D$s so that no spurious corners are created; this means distributing them among the $r$ $\Ro$s and the $\ell$ $\R\D$ strings (in this stage, multiple $\D$s can lie in succession). The number of such combinations is $\binom{(m-\ell) + (r+\ell)}{r+\ell} = \binom{m+r}{c}$.
\end{enumerate}
There is a trivial bijection between the words created in step a) and the words of $\P{n-r-\ell}{\ell}$: simply ignore all $\Ro$s, and substitute every $\R\D$ string with a $\D$. A swap $\D\,\R\D \leftrightarrow \R\D\,\D$ corresponds to a swap $\D\,\R \leftrightarrow \R\,\D$ in $\P{n-r-\ell}{\ell}$ (cf. \S\ref{sect:Definitions}), and moreover, they elicit the {\em same} change in $\CINDEX$. It follows that the generating function of $\CINDEX$ among the words from step a) is
\[\qbinom{n-r}{\ell} \cdot \l^x\]
for some $x$.

For every word from step a), consider now the corresponding set of words created in step b). This time we have a bijection to the words in $\P{m-\ell}{r+\ell}^{\O}$ by ignoring the inserted $\D$s, and those plain $\R$s that have not formed a corner (notice we {\em do} pay attention to ornaments now). This time, the swaps $\Ro\,\D \leftrightarrow \D\,\Ro$, and $\R\D\,\D \leftrightarrow \D\,\R\D$ elicit the correct $\CINDEX$ changes that imply the generating function of $\CINDEX$ (restricted to the newly inserted $\D$s) is
\[\qbinom{(m-\ell)+(r+\ell)}{r+\ell} \cdot \l^y\]
for some $y$.

Since the choices made in stages a) and b) can be made independently, the generating function for $\CINDEX$ on all words of $\P{m}{n}^{\O}$ is the product of the two binomial terms we have found:
\[\qbinom{n-r}{c-r} \cdot \qbinom{m+r}{c} \cdot \l^z,\]
and it only remains to evaluate the constant shift $z$. However, $z$ is the smallest $\CINDEX$ value, and this is attained by the word
\[
  \underbrace{\Ro \ldots \Ro}_{r} \underbrace{\R\D \ldots \R\D}_{\ell}
  \underbrace{\D \ldots \D}_{m-d-\ell} \underbrace{\R \ldots \R}_{n-r-\ell}.
\]
from where it is clear that $z = (c-r)^2 + (c-r)r + \binom{r+1}{2} = (c-r)c + \binom{r+1}{2}$, thus concluding the proof.
\end{proof}

\subsection{$\boldsymbol\AREA$ revisited, and further ideas}
\label{sect:Final}
We would like to conclude with two observations. First, Theorem~\ref{thm:Main} implies all our early claims about equidistribution of statistics, but there is a more explicit connection between ornated corners and areas:

The distribution of $\CORNERS$ on unornated paths in $\P{m}{n}$ is $\sum_c \binom{m}{c}\binom{n}{c} \cdot \m^c$. On the other hand, if $\O = (\{0,\ldots,m-1\},\varnothing)$ is the scrambler with all horizontal, and no vertical ornaments, then every path $p \in \P{m}{n}^{\O}$ has $m$ corners, and satisfies $\CINDEX(p) = \AREA(p) + \tfrac{(m-2)(m-1)}{2}$. From these two observations it follows easily that the generating function of $(\AREA,\CORNERS)$ on $\P{m}{n}^{\O}$ is
\[\sum_c \qbinom{m}{c}\qbinom{n}{c} \cdot \m^c.\]

Second, we are indebted to Niklas Eriksen for suggesting an alternative interpretation of the effect of ornaments, so they interact with corners of $\D\R$ type, as well as with ``normal'' corners of $\R\D$ type. The results are exactly the same, and the presentation seems more symmetrical, even if the ornaments still have a preferred direction of facing paths (to the left for horizontal, and downward for vertical). We expect that variations on the theme of corners and ornaments may still offer interesting results.

\bibliographystyle{plain}
\bibliography{ref}

\end{document}